\newtheorem{theorem}{Theorem}[section]
\newtheorem{lemma}[theorem]{Lemma}
\newtheorem{corollary}[theorem]{Corollary}
\newtheorem{proposition}[theorem]{Proposition}
\theoremstyle{definition}
\newtheorem{definition}[theorem]{Definition}
\newtheorem{example}[theorem]{Example}
\newtheorem*{remark}{Remark}
\newtheorem{construction}[theorem]{Construction}
\newcommand{\Z}{\mbox{${\bf Z}$}}
\newcommand{\aut}{\mbox{${\rm Aut}$}}
\newcommand{\autga}[0]{\mbox{${\rm Aut}(\Gamma)$}}
\newcommand{\out}{\mbox{${\rm out}$}}
\newcommand{\inn}{\mbox{${\rm in}$}}
\newcommand{\ZZ}{\mathbb{Z}}
\newcommand{\Aut}{\mathrm{Aut}}
\newcommand{\V}{\mathrm V}
\newcommand{\E}{\mathrm E}
\newcommand{\A}{\mathrm A}
\newcommand{\R}{\mathrm R}
\newcommand{\Sym}{\mathrm{Sym}}
\newcommand{\mv}{\mbox{${\rm mv}$}}
\title{Trivalent vertex-transitive graphs with infinite vertex-stabilizers}
\author[1]{Arnbj\"org Soff\'ia \'Arnad\'ottir}
\author[2]{Waltraud Lederle}
\author[3]{R\"ognvaldur G. M\"oller\footnote{e-mail: roggi@raunvis.hi.is. }}
\affil[1]{University of Waterloo}
\affil[2]{Universit\'e Catholique de Louvain}
\affil[3]{University of Iceland}
\begin{document}

\maketitle

\begin{abstract}
We study groups acting vertex-transitively on connected, trivalent graphs such that stabilizers of vertices are infinite.
If the action is edge-transitive, we prove that the graph has to be a tree.
We analyze the case where the action is not edge-transitive and fully classify the possible $2$-ended graphs.
We draw connections to Willis' scale function and re-prove a result by Trofimov.
\end{abstract}




\section*{Introduction}

Tutte's papers on trivalent graphs in 1947, \cite{Tutte1947},  and 1959, \cite{Tutte1959}, are rightly regarded as the starting point of the study of group actions on graphs as a separate discipline and his ideas in these two papers still today have deep and profound influences.  Tutte investigates arc-transitive group actions on finite, connected, trivalent graphs.  In many of his results the assumption that the graph is finite can be dropped and replaced with the assumption that the stabilizer of a vertex is a finite group, see \cite{DjokovicMiller1980}.  For instance, if $\Gamma$ is a connected trivalent graph and $G$ acts arc-transitively on $\Gamma$ and vertex stabilizers are finite, then $G$ acts regularly on the set of $s$-arcs for some $s\leq 5$.   

The aim in this work is to study the \lq\lq other\rq \rq\ case; i.e.\ vertex-transitive subgroups of the automorphism group of a trivalent, connected graph such that the  stabilizer of a vertex is an infinite group.  It turns out that insisting on infnite vertex stabilzers has a drastic influence on the graph.

Tutte's idea to study the action of the group on $s$-arcs is also fundamental in our work.  Using Tutte's methods we prove:

\smallskip

{\bf Corollary 3.4.}
{\em Let $\Gamma$ be a connected, trivalent graph.  Suppose $G\leq \aut(\Gamma)$ acts  
vertex- and edge-transitively on $\Gamma$ and assume the stabilizers in $G$ of vertices in $\Gamma$ are infinite.  Then $\Gamma$ is a 3-regular tree.}

\smallskip 

But the 3-regular tree is not the only example of a connected trivalent graph such that the automorphism group is vertex-transitive and stabilizers of vertices are infinite.  In these other cases the automorphism group has two orbits on the edges.  Here the key is to study the action of the group on $s$-arcs where the edges come alternatively from the two orbits.  Several examples of such graphs are described in Section 4.
We give a full classification of such graphs with only $2$ ends, see Theorem~\ref{thm:2-ends}.
A general classification of possible graphs seems difficult.

Locally finite, vertex-transitive graphs are tightly connected to totally disconnected, locally compact groups via the Cayley--Abels graph.
A Cayley--Abels graph of a compactly generated, totally disconnected, locally compact group is an analogue of an ordinary Cayley graph for a finitely generated group.  Our results can be applied to the study of  compactly generated, totally disconnected, locally compact groups that have a trivalent Cayley--Abels graph.

\smallskip

{\bf Corollary 6.3.}
{\em Let $G$ be a compactly generated, totally disconnected, locally compact group having a trivalent Cayley--Abels graph.
If every $g \in G$ normalizes a compact, open subgroup of $G$ (i.e.~$G$ is uniscalar), then $G$ has a compact, open, normal subgroup.}

\smallskip

A further application is a short proof of the following theorem of Trofimov.

\smallskip

{\bf Theorem 7.2.}{\rm (\cite[Theorem 3.1]{Trofimov1984})}
{\em Let $\Gamma$ be a vertex transitive trivalent graph and $G=\aut (\Gamma)$.  Then $G$ has a compact normal subgroup $N$ such that the stabilizers in $\aut (\Gamma/N)$ of vertices in $\Gamma/N$ are finite or $\Gamma_2$ contains a subgraph isomorphic to the 3-regular tree.}
\smallskip

Here $\Gamma_2$ denotes the graph one gets by adding to $\Gamma$ all edges of the type $\{\alpha, \beta\}$, where the distance between $\alpha$ and $\beta$ is 2.

\section{Notation and preliminary remarks}

\subsection{Graphs}
The graphs we consider have neither loops nor multiple edges. Thus an (undirected) graph $\Gamma$ can be defined as a pair $(\V\Gamma, \E\Gamma)$, where $\V\Gamma$ is the set of {\em vertices} and $\E\Gamma$, the set of {\em edges}, is a set of two element subsets of $\V\Gamma$. We define the set of {\em arcs}, $\A\Gamma$, of $\Gamma$ as the set of all ordered pairs $(\alpha, \beta)$ such that $\{\alpha, \beta\}\in \E\Gamma$.  
Two vertices $\alpha$ and $\beta$ are said to be {\em adjacent}, or {\em neighbours}, if $\{\alpha, \beta\}$ is an edge.  The {\em degree} of a vertex in a graph is the cardinality of its set of neighbours.  A graph is said to be {\em regular} if all vertices have the same degree $d$, and then we say that $d$ is the degree of the graph.
A graph is {\em locally finite} if the degree of every vertex is finite.  

 We also consider digraphs (directed graphs). A \emph{digraph} consists of a vertex set $\V\Gamma$ and a subset  $\A\Gamma \subseteq \V\Gamma \times \V\Gamma$ that does not intersect the diagonal.  The elements of $\V\Gamma$ are called vertices and the elements of $\A\Gamma$ are called {\em arcs}.  
The \emph{underlying undirected graph} of a digraph $\Gamma$ has the same vertex set as $\Gamma$ and the set of edges is the set of all pairs $\{\alpha, \beta\}$ where $(\alpha, \beta)$ or $(\beta, \alpha)$ is an arc in $\Gamma$.   
For a vertex $\alpha$ in a digraph $\Gamma$ 
we define the sets of {\em in-} and {\em out-neighbours} as
$\inn(\alpha)=\{\beta\in \V\Gamma\mid (\beta,\alpha)\in \A\Gamma\}$ and 
$\out(\alpha)=\{\beta\in \V\Gamma\mid (\alpha,\beta)\in \A\Gamma\}$, respectively.
The cardinality of $\inn(v)$ is the {\em in-degree} of $v$ and the cardinality of $\out(v)$ is the {\em out-degree} of $v$.   A digraph is {\em regular} if any two vertices have the same in-degree and also the same out-degree.  

For an integer $s \geq 0$ an \emph{$s$-arc} in $\Gamma$ (a digraph or an undirected graph) is a $(s+1)$-tuple $(\alpha_0,\dots,\alpha_s)$ of vertices such that for every $0 \leq i \leq s-1$ the pair $(\alpha_{i},\alpha_{i+1})$ is an arc in $\Gamma$, and $\alpha_{i-1} \neq \alpha_{i+1}$ for all $1 \leq i \leq s-1$.   Infinite arcs come in three different shapes.  There are  1-way infinite arcs,  $(\ldots, \alpha_{-1}, \alpha_0)$ and $(\alpha_0, \alpha_1, \ldots)$, and then there are 2-way infinite arcs $(\ldots, \alpha_{-1}, \alpha_0, \alpha_1, \ldots)$.  In all cases we insist that $(\alpha_{i},\alpha_{i+1})$ is an arc in $\Gamma$, and $\alpha_{i-1} \neq \alpha_{i+1}$ for all $i$.

A {\em path} of length $s\geq 0$ in a graph $\Gamma$ is a subgraph with vertex set $\{\alpha_0, \ldots, \alpha_s\}$, the vertices $\alpha_0, \ldots, \alpha_s$ are distinct,  and edge set $\{\{\alpha_0, \alpha_1\},\ldots, \{\alpha_{s-1}, \alpha_s\}\}$.  The vertices $\alpha_0$ and $\alpha_s$ are called the {\em end-vertices} of the path and we speak of an $\alpha_0-\alpha_s$ path. 
Paths can also be infinite.  A {\em ray} in a graph $\Gamma$ is a subgraph with vertex set $\{\alpha_0, \alpha_1, \ldots \}$ and edge set $\{\{\alpha_0, \alpha_1\},\{\alpha_1, \alpha_2\}, \ldots\}$ such that all the vertices $\alpha_0, \alpha_1, \ldots$ are distinct. A {\em line} is a subgraph with vertex set $\{\ldots, \alpha_{-1}, \alpha_0, \alpha_1, \ldots \}$ and edge set $\{\ldots, \{\alpha_{-1}, \alpha_0\},\{\alpha_0, \alpha_1\},\{\alpha_1, \alpha_2\}, \ldots\}$ such that all the vertices $\ldots, \alpha{-1}, \alpha_0, \alpha_1, \ldots$ are distinct.   We often refer to paths, rays and lines by listing the vertices in the natural order.  Thus a path $P$ with vertex set $\{\alpha_0, \ldots, \alpha_s\}$ and edge set $\{\{\alpha_0, \alpha_1\},\ldots, \{\alpha_{s-1}, \alpha_s\}\}$ will be denoted by $P=\alpha_0, \ldots, \alpha_s$ and similarly for rays and lines.  

We say a sequence $\alpha_0, \ldots, \alpha_s$ is a path in a digraph $\Gamma$ if the vertices $\alpha_0, \ldots, \alpha_s$ are all distinct and $(\alpha_i, \alpha_{i+1})$ or $(\alpha_i, \alpha_{i+1})$ is an arc for all $0\leq i\leq s-1$.  Equivalently, a sequence $\alpha_0, \ldots, \alpha_s$ is a path if and only if it is a path in the underlying undirected graph. Rays and lines in digraphs are defined analogously.


We say that $\Gamma$ is {\em connected} if for every pair of vertices $\alpha$ and $\beta$ in $\Gamma$ there exists an $\alpha-\beta$ path in $\Gamma$.  The {\em distance} between vertices $\alpha$ and $\beta$ in a connected graph is defined as the length of a shortest $\alpha-\beta$ path and is denoted with $d_\Gamma(\alpha, \beta)$.  A digraph is connected if its underlying undirected graph is connected and the distance between two vertices in a connected digraph is the same as the distance between the corresponding vertices in the underlying undirected graph.

An {\em end} of a graph $\Gamma$ is an equivalence class of rays: two rays $R_1$ and $R_2$ in $\Gamma$ are said to be equivalent if there is a third ray $R_3$ that intersects both $R_1$ and $R_2$ in infinitely many vertices.  In the special case when the graph $\Gamma$ is a tree then two rays belong to the same end if and only if their intersection is a ray.  The set of ends of $\Gamma$ is denoted with $\Omega\Gamma$.   When $\Gamma$ is a digraph we define the ends of $\Gamma$ in terms of the ends of the underlying undirected graph.  

\subsection{Groups}

Let $G$ be a group acting (on the right) on a set $\Omega$.  
Denote the image of a point $\alpha\in \Omega$ under an element $g\in G$ by $\alpha g$.  The action is said to be {\em transitive} if for any two points $\alpha, \beta$ in $\Omega$ there exists an element $g\in G$ such that $\alpha g=\beta$.  The {\em stabilizer} of $\alpha\in \Omega$ is the subgroup $G_\alpha=\{g\in G\mid \alpha g=\alpha\}$.   For a set $A\subseteq \Omega$  the {\em pointwise stabilizer} of $A$ is the subgroup
$G_{(A)}=\{g\in G\mid \alpha g=\alpha\mbox{ for all }\alpha \in A\}$.   The kernel of the action is the subgroup $K=\{g\in G\mid \alpha g=\alpha \mbox{ for all }\alpha\in \Omega\}$.  When $K=\{1\}$ we say that the action is {\em faithful} and then we can think of $G$ as a permutation group of $\Omega$, i.e.\ a subgroup of $\Sym(\Omega)$, the group of all symmetry group of the set $\Omega$.   

An action of a group $G$ on a set $\Omega$  is called \emph{semi-regular} (or free) if $G_\alpha=\{1\}$ for all points $\alpha\in \Omega$ and \emph{regular} if it is semi-regular and transitive.

 A \emph{graph morphism} between two graphs (or digraphs) $\Gamma$ and $\Delta$ is a map $\varphi \colon \V\Gamma \to \V\Delta$ such that if $(\alpha, \beta) \in \A\Gamma$ then $(\varphi(\alpha),\varphi(\beta)) \in \A\Delta$.
  If $\Gamma$ is a graph or a digraph and $\varphi:\V\Gamma\to\V\Gamma$ is a bijective map, then $\varphi$ is an \emph{automorphism} of $\Gamma$ if $\varphi$ induces a bijection $\A\Gamma\to \A\Gamma$.  The set of all automorphisms of $\Gamma$ is a group, the {\em automorphism group of $\Gamma$},  denoted by $\Aut(\Gamma)$.  We will think of $\Aut(\Gamma)$ and subgroups of $\Aut(\Gamma)$ as permutation groups on $\V\Gamma$.
  
  A graph or a digraph $\Gamma$ is {\em vertex-transitive} if the automorphism group acts transitively on the vertex set.  Vertex-transitive graphs are always regular.  We say that $\Gamma$ is {\em edge-transitive} or {\em arc-transitive} if the automorphism group acts transitively on the edge set or arc set, respectively.    If the automorphism group of $\Gamma$ acts transitively on the set of $s$-arcs in $\Gamma$ then we say that $\Gamma$ is {\em $s$-arc-transitive}.  When the automorphism group is $s$-arc-transitive for all $s$,  we say that $\Gamma$ is {\em highly-arc-transitive}. 

Consider now a group $G$ that acts vertex-transitively on a graph $\Gamma$ of degree $d$. Let $\alpha \in \V\Gamma$.
The stabilizer $G_\alpha$ clearly leaves $N(\alpha)$, the set of neighbours of $\alpha$, invariant and thus induces an action on it. The kernel of this action is $K_\alpha=G_\alpha\cap G_{(N(\alpha))}$ and the quotient $G_\alpha / K_\alpha$ is a subgroup of $\Sym(d)$. Let now $\alpha'$ be another vertex of $\Gamma$. By assumption there exists $g \in G$ with $\alpha g = \alpha'$. The actions of $G_\alpha$ on $N(\alpha)$ and $G_{\alpha'}$ on $N(\alpha)$ are conjugate via $g$. Thus, the following is independent of the choice of $\alpha$.

\begin{definition}
  Let $\Gamma$ be a graph of degree $d$ on which a group $G$ acts vertex-transitively.
  Let $\alpha \in \V\Gamma$.
  The \emph{local action} of $G$ on $\Gamma$ is the conjugacy class of the finite group $G_{\alpha}/K_\alpha$, seen as a subgroup of $\Sym(d)$.
\end{definition}
Usually we will say that the local action is the subgroup $G_{\alpha}/K_\alpha$ of $\Sym(d)$ and omit the mention of the conjugacy class.

When $\sigma$ is an equivalence relation on the vertex set of a graph $\Gamma$ we can form the {\em quotient graph}  $\Gamma/\sigma$. Its vertex set is the set of $\sigma$-classes, and if $A$ and $B$ are distinct $\sigma$-classes then $\{A,B\}$ is an edge in $\Gamma/\sigma$ if and only if there is a vertex $\alpha\in A$ and a vertex $\beta\in B$ such that $\{\alpha,\beta\}$ is an edge in $\Gamma$.  If $G$ is a subgroup of $\autga$ then $\Gamma/G$ denotes the quotient graph of $\Gamma$ with respect to the equivalence relation whose classes are the $G$-orbits on the vertex set.  If $\sigma$ is a $G$-congruence (i.e.~$\alpha g$ is equivalent to $\beta g$ if and only if $\alpha$ is equivalent to $\beta$) then $G$ has a natural action on the $\sigma$-classes and thus an action on the quotient graph $\Gamma/\sigma$ by automorphisms.  Quotients of digraphs are defined in the obvious way.  

A faithful action of a group $G$ on a set $\Omega$ is said to be {\em discrete} if the stabilizers of vertices are finite.  If the action is discrete it is possible to find a finite subset $A \subset \Omega$ such that $G_{(A)}=\{1\}$.  

\subsection{Convergent sequences of permutations}\label{sec:Convergence}

In this section the notions of {\em convergence of sequences of permutations} and and {\em closed groups of permutations } are introduced.  Here we avoid actually introducing a topology, but in Section~\ref{sec:Scale} we will see a group topology on a permutation group such that the convergence we introduce here is convergence in that topology.

\begin{definition}
  Let $\{g_i\}$ be a sequence of permutations of some set $\Omega$.  We say that the sequence converges to a permutation $g$ of $\Omega$ if for every point $\alpha\in \Omega$ there exists a number $N_\alpha \geq 0$ such that $\alpha g_i=\alpha g$ for all $i\geq N_\alpha$.
  
  A group $G$ of permutations of some set $\Omega$ is said to be {\em a closed permutation group} (or a {\em closed} subgroup of $\Sym(\Omega)$) if, whenever $\{g_i\}$ is a sequence of permutations in $G$ converging to a permutation $g$ of $\Omega$, then $g\in G$.
\end{definition}

It is easy to show that the automorphism group of a graph (or a digraph) $\Gamma$ is closed.  It is also easy to see that if the action is discrete then every convergent sequence is eventually constant.  
The following lemma will be used in Section~\ref{sec:three-cases} and is the reason why these terms are introduced here.  

\begin{lemma}{\rm (Cf.\ \cite[Lemma 1]{Moller2002a})}\label{lem:infinte-arcs}
Let $\Gamma$ be a locally finite, connected graph (or digraph) and assume that $G$ is a closed subgroup of $\aut(\Gamma)$.  Suppose $G$ acts highly-arc-transitively on $\Gamma$.  Then $G$ acts transitively on the set of 2-way infinite arcs of $\Gamma$. In particular, $G$ acts transitively on the set of 1-way infinite arcs of $\Gamma$ of type $(\ldots, \alpha_{-1}, \alpha_0)$ and on the set of 1-way infinite arcs of type $(\alpha_0, \alpha_1, \ldots)$.
\end{lemma}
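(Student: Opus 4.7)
The plan is to use a standard compactness-type argument based on $s$-arc-transitivity for arbitrarily large $s$, together with local finiteness, to extract a limit automorphism, then use closedness of $G$.

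Let $\alpha = (\ldots, \alpha_{-1}, \alpha_0, \alpha_1, \ldots)$ and $\beta = (\ldots, \beta_{-1}, \beta_0, \beta_1, \ldots)$ be any two $2$-way infinite arcs in $\Gamma$. For each $n \geq 0$, the truncation $(\alpha_{-n}, \ldots, \alpha_n)$ is a $2n$-arc, and high-arc-transitivity gives $g_n \in G$ with $g_n \alpha_i = \beta_i$ for every $-n \leq i \leq n$; in particular $g_n \alpha_0 = \beta_0$ for all $n$.

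Now I would exploit local finiteness: for every vertex $v$, the distance $d_\Gamma(v g_n, \beta_0) = d_\Gamma(v, \alpha_0)$ is independent of $n$, so $\{v g_n : n \in \NN\}$ lies in the finite ball of radius $d_\Gamma(v, \alpha_0)$ around $\beta_0$ and hence is a finite set. Enumerating $\V\Gamma = \{v_1, v_2, \ldots\}$ and applying a standard diagonal extraction, I extract a subsequence $(g_{n_k})$ along which $v_j g_{n_k}$ is eventually constant for every $j$; call this constant value $v_j g$. Applying the same argument to the inverses $g_n^{-1}$ (which also satisfy $\beta_0 g_n^{-1} = \alpha_0$), and passing to a further subsequence, I may also assume $w g_{n_k}^{-1}$ is eventually constant for every vertex $w$, defining a second map $h$.

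Next I would verify that $g$ is a graph automorphism of $\Gamma$. Edge-preservation and injectivity are immediate from the fact that each $g_{n_k}$ is an automorphism and convergence is eventually constant on any finite set of vertices. Bijectivity follows from $g \circ h = h \circ g = \id$, which is also a pointwise-eventual statement. (In the digraph case the same argument works with arcs in place of edges.) Thus $g \in \aut(\Gamma)$, and by definition the sequence $(g_{n_k})$ converges to $g$; since $G$ is closed, $g \in G$. Finally, for each fixed $i \in \ZZ$ we have $\alpha_i g_{n_k} = \beta_i$ for all sufficiently large $k$, hence $\alpha_i g = \beta_i$, so $g$ carries the $2$-way infinite arc $\alpha$ to $\beta$.

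The \emph{in particular} statement about $1$-way infinite arcs follows by the same compactness extraction, now starting from $g_n$ which by $n$-arc-transitivity maps the initial segment $(\alpha_0, \ldots, \alpha_n)$ of a forward ray-arc to the corresponding segment of the target (and symmetrically for backward ray-arcs). The main obstacle is purely bookkeeping: ensuring the diagonal subsequence simultaneously stabilises both $g_n$ and $g_n^{-1}$ on every vertex so that the limit is a genuine automorphism rather than just an injective endomorphism; local finiteness together with the preservation of distance to $\alpha_0$ is precisely what makes this possible.
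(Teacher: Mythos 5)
Your proposal is correct and follows essentially the same route as the paper: use $s$-arc-transitivity for all $s$ to get approximating elements $g_n$, then local finiteness plus a diagonal extraction to produce a pointwise-eventually-constant limit permutation, which lies in $G$ by closedness. The only cosmetic differences are that you secure bijectivity of the limit by also stabilising the inverses (the paper gets a permutation directly from the distance-preserving extraction) and you re-run the extraction for $1$-way infinite arcs, whereas the paper simply extends a $1$-way infinite arc to a $2$-way infinite arc and quotes the transitivity already proved.
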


\begin{proof}
Let  $(\ldots, \alpha_{-1}, \alpha_0, \alpha_1, \ldots)$ and  $(\ldots, \beta_{-1}, \beta_0, \beta_1, \ldots)$ denote two 2-way infinite arcs in $\Gamma$.  Since $G$ acts highly-arc-transitively on $\Gamma$, there is for each $i\geq 0$ an element $g_i$ such that $(\alpha_{-i}, \ldots, \alpha_i)g_i=(\beta_{-i}, \ldots, \beta_i)$.  Let $A_i$ denote the set of all vertices in $\Gamma$ at distance at most $i$ from $\alpha_0$.  Because the graph $\Gamma$ is assumed to be locally finite, the sets $A_i$ are all finite.  All elements in the sequence $\{g_i\}$ map the vertex $\alpha_0$ to the vertex $\beta_0$.  Since $A_1$ is finite, there are only finitely many possibilities for the maps we get by restricting the $g_i$'s to $A_1$.  Hence there is an infinite set $C_1$ of elements from the sequence $\{g_i\}$ such that the restriction of all these elements to $A_1$ is the same.  Let $i_1$ be a number such that $g_{i_1}$ is in $C_1$.  There are also only finitely many possibilities for the restriction of the permutations in the sequence $\{g_i\}$ to $A_2$ and thus we get an infinite subset $C_2$ of $C_1$ such that restrictions of the elements in $C_2$ to $A_2$ are all identical.  Choose $i_2$ such that $i_2>i_1$ and $g_{i_2}$ is in $C_2$.  Continuing in this way we get a subsequence $\{g_{i_j}\}$ of our original sequence so that if $j, j'\geq i$ and $\alpha$ is a vertex in $A_i$ then $\alpha g_{i_j}=\alpha g_{i_{j'}}$.  Hence we can define a permutation $g$ of the vertex set of $\Gamma$ by saying that $\alpha g$ is equal to $\alpha g_{i_j}$ for $j$ equal to the distance in $\Gamma$  between $\alpha_0$ and $\alpha$.  It is now clear that the sequence $\{g_{i_j}\}$ converges to $g$ and that $(\ldots, \alpha_{-1}, \alpha_0, \alpha_1, \ldots)g=(\ldots, \beta_{-1}, \beta_0, \beta_1, \ldots)$.   The assumption that $G$ is a closed permutation group guarantees that $g\in G$.  This shows that $G$ acts transitively on the set of 2-way infinite arcs.  

In a highly-arc-transitive graph a 1-way infinite arc of either type can always be extended to a 2-way infinite arc and thus the statement about the transitivity of the action on the sets of 1-way infinite arcs follows from the transitivity of the action on 2-way infinite arcs.
\end{proof}

\section{Three cases}\label{sec:three-cases}

The first step in our investigation is to use the local action and the action on the edges and arcs to divide non-discrete vertex-transitive group actions on trivalent graphs into three cases.  

Suppose $G$ is a vertex-transitive subgroup of the automorphism group of some connected trivalent graph $\Gamma$.
The local action of $G$ on $\Gamma$ is a conjugacy class of subgroups of the symmetric group $S_3$. There are four possibilities: the trivial group, the cyclic group of order $3$, the cyclic group of order $2$ and the whole group $S_3$.   
First note that if the stabilizer of a vertex acts locally like the trivial group then, since $\Gamma$ is connected, we see that the stabilizer of a vertex acts trivially on the graph.  In the case where the stabilizer of a vertex acts locally like a cyclic group of order 3 we see similarly that the subgroup fixing some pair of adjacent vertices is trivial.  In both cases the stabilizer of a vertex is a finite group and the action is discrete.  

If $G$ is non-discrete then we are left with the possibilities that the group acts locally either like the full symmetric group or like the cyclic group of order 2.  In the first case it is clear that  the group acts both edge- and arc-transitively on $\Gamma$. 

Assume that $G$ acts locally like a cyclic group of order 2.  It is possible that the group $G$ is edge-transitive, and then $G$ is not arc-transitive,  but it is also possible $G$ has  two orbits on the edges of $\Gamma$. Let us briefly analyse the latter case.  

Let $\alpha$ be a vertex of $\Gamma$ and let $\beta$ denote the neighbour of $\alpha$ that is fixed by $G_\alpha$.
Edges in the $G$-orbit of $\{\alpha, \beta\}$  will be called {\em red} and the edges in the other edge-orbit will be called {\em blue}.  Each vertex in $\Gamma$ is therefore the end-vertex of precisely one red edge and precisely two blue edges and this colouring is preserved by the action of $G$.   We say that arcs in $\Gamma$ inherit a colour from the edge that gives rise to them. 

Remove all the blue edges from $\Gamma$.  As each vertex is the end vertex of only one red edge, we get a vertex-transitive graph of degree 1. We see that $G$ must act transitively on the red arcs, in particular there exists an element $g\in G$ such that $\alpha g=\beta$ and $\beta g=\alpha$.  Removing the red edges from $\Gamma$ we get a vertex-transitive graph of degree 2.  Each connected component is therefore either a finite cycle or a line and the connected components are all isomorphic. Since the stabilizer of a vertex acts locally like the cyclic group of order 2 we see that $G$ acts transitively on the blue arcs.  Hence the group has two orbits on the arcs of $\Gamma$.  

\medskip

The outcome of the above discussion is that when we have a connected trivalent graph $\Gamma$ and a non-discrete subgroup $G\leq \aut(\Gamma)$ acting vertex-transitively, then there are three possible cases:  

\medskip

Case {\bf A}:  The stabilizer of a vertex is infinite and the group acts locally like the symmetric group on three elements.  The group acts transitively on both the set of edges and the sets arcs of $\Gamma$.  

\smallskip
Case {\bf B}:  The stabilizer of a vertex is infinite and acts locally like a cyclic group of order two.  The group acts transitively on the edges, but is not transitive on the arcs.  

\smallskip 

Case {\bf C}:  The stabilizer of a vertex is infinite and acts locally like a cyclic group of order two and the group has two orbits on the edges and two orbits on the arcs. We call edges that are fixed by the local action {\bf red} and other edges {\bf blue}.

\medskip

Before continuing to analyse these cases let us look at examples.

\begin{example}\label{ex:the-cases} $ $

\begin{enumerate}
\item The regular 3-valent tree $T_3$ and its automorphism group are an example of a graph satisfying the conditions in Case {\bf A}.  It is shown in the next section that in Case {\bf A} the graph $\Gamma$ must be the 3-valent tree.
\item Let $\Gamma=T_3$ and let $\Gamma_+$ be the digraph we get if we orient the edges of $\Gamma$ so that at each vertex there is one incoming arc and two outgoing arcs.  The action of $\aut(\Gamma_+)$ on $\Gamma$ satisfies the conditions in Case {\bf B}.  In the next section it is shown that in Case {\bf B} the graph $\Gamma$ is equal to $T_3$.
\item  Let $\Gamma=T_3$ denote the 3-regular tree.  Colour each edge red or blue so that each vertex is adjacent to one red edge and two blue edges.  Let $G$ denote the subgroup of the automorphism group of $\Gamma$ that preserves this colouring.  The stabilizer in $G$ of a vertex $\alpha$ is infinite and the action of $G$ on $\Gamma$ satisfies the conditions in Case {\bf C}.  
\item  The arc-graph $A_1(\Gamma)$ of a graph $\Gamma$ has as its vertex set the set of arcs of $\Gamma$ and two arcs $(\alpha, \beta)$ and $(\gamma, \delta)$ are adjacent in $A_1(\Gamma)$ if and only if $\beta=\delta$ or both $\alpha=\delta$ and $\beta=\gamma$.   Each vertex in $T_3$ gives rise to a triangle in $A_1(T_3)$ and the triangles for a pair of adjacent vertices are joined by a single edge.  This graph is trivalent and if we colour the edges in the triangles blue and the other edges red then we have the situation described in Case {\bf C}.   Clearly it is possible to join $n$-gons in a similar way to get a trivalent graph resembling the $n$-regular tree that also satisfies the condition in Case {\bf C}.
\item  Start with a $2n$-gon.  For each pair $\alpha, \beta$ of opposite vertices in that $2n$-gon take a new $2n$-gon and select some pair $\delta, \gamma$ of opposite vertices in the new $2n$-gon.  Now add edges $\{\alpha, \delta\}$ and $\{\beta, \gamma\}$.  Then look at pairs of opposite vertices in the new $2n$-gons where the vertices have degree 2 and for each such pair get a new $2n$-gon.  Continue like this {\em ad infinitum} until you have got a 3-regular graph (see Figure \ref{fig:trivalent2}).  The stabilizer of a vertex in the automorphism group of this new graph is clearly infinite and we have an example of Case {\bf C}.  Contracting each and everyone of the $2n$-gons leaves us with the $n$-regular tree.
\begin{figure}
    \centering
    \includegraphics[scale=0.5]{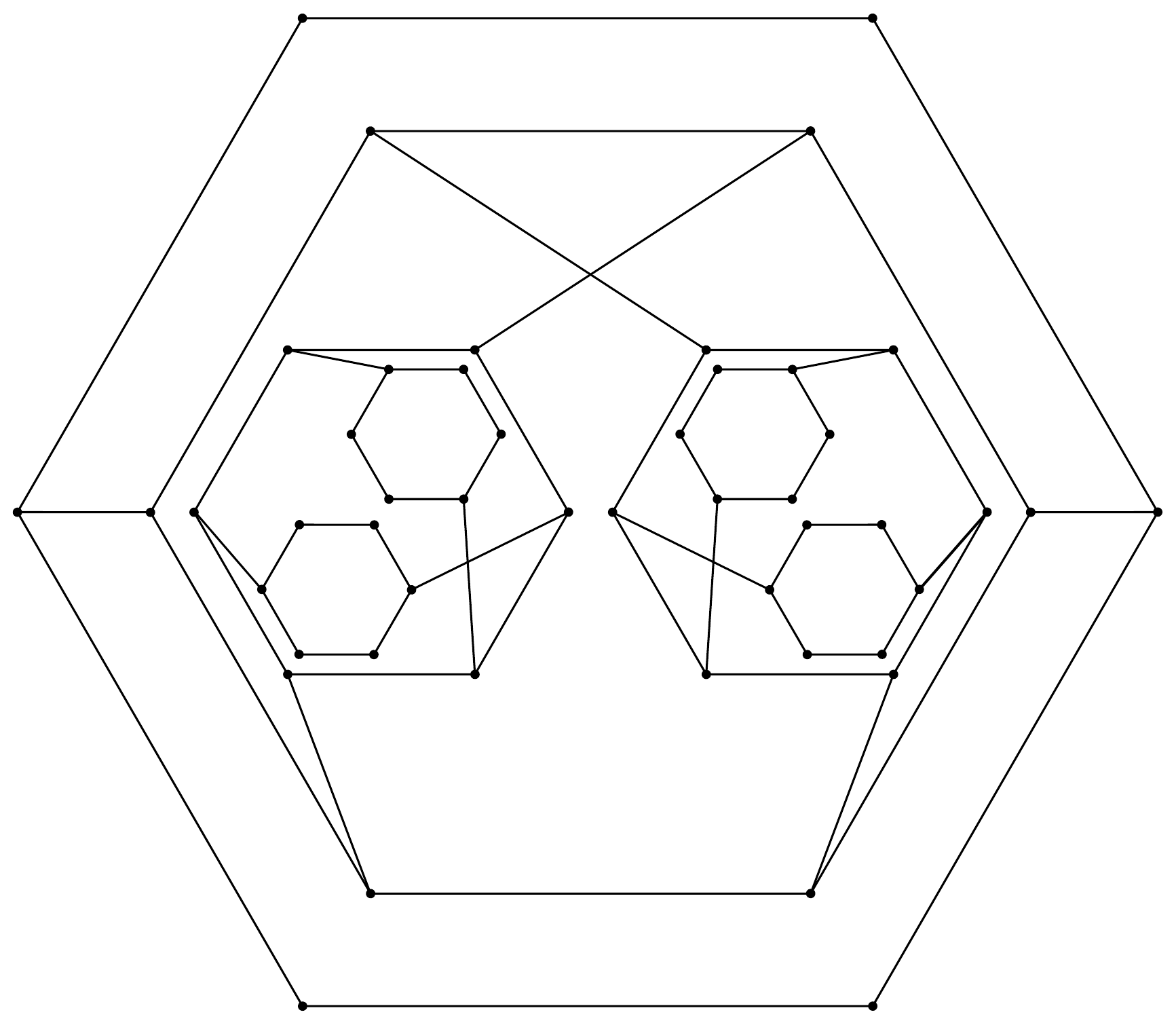}
    \caption{An example of Case {\bf C}}
    \label{fig:trivalent2}
\end{figure}
\item Suppose $\Delta$ is a connected digraph such that both the in- and out-degrees of all vertices are equal to 2.
We construct a trivalent graph by essentially replacing every vertex of $\Delta$ by an arc pointing from the two incoming to the two outgoing arcs.
Formally, let $\Delta_*$ be defined as follows. For each vertex $\alpha$ in $\Delta$ we put two vertices $\alpha_-$ and $\alpha_+$ in the vertex set of $\Delta_*$. The arc set of $\Delta_*$ consists of all pairs $(\alpha_-, \alpha_+)$ and all pairs $(\alpha_+, \beta_-)$, where $(\alpha, \beta)$ is an arc in $\Delta$ (see Figure \ref{fig:DL}). 
\begin{figure}
    \centering
\begingroup%
  \makeatletter%
  \providecommand\color[2][]{%
    \errmessage{(Inkscape) Color is used for the text in Inkscape, but the package 'color.sty' is not loaded}%
    \renewcommand\color[2][]{}%
  }%
  \providecommand\transparent[1]{%
    \errmessage{(Inkscape) Transparency is used (non-zero) for the text in Inkscape, but the package 'transparent.sty' is not loaded}%
    \renewcommand\transparent[1]{}%
  }%
  \providecommand\rotatebox[2]{#2}%
  \newcommand*\fsize{\dimexpr\f@size pt\relax}%
  \newcommand*\lineheight[1]{\fontsize{\fsize}{#1\fsize}\selectfont}%
  \ifx\svgwidth\undefined%
    \setlength{\unitlength}{131.30867618bp}%
    \ifx\svgscale\undefined%
      \relax%
    \else%
      \setlength{\unitlength}{\unitlength * \real{\svgscale}}%
    \fi%
  \else%
    \setlength{\unitlength}{\svgwidth}%
  \fi%
  \global\let\svgwidth\undefined%
  \global\let\svgscale\undefined%
  \makeatother%
  \begin{picture}(1,0.75881501)%
    \lineheight{1}%
    \setlength\tabcolsep{0pt}%
    \put(0,0){\includegraphics[width=\unitlength,page=1]{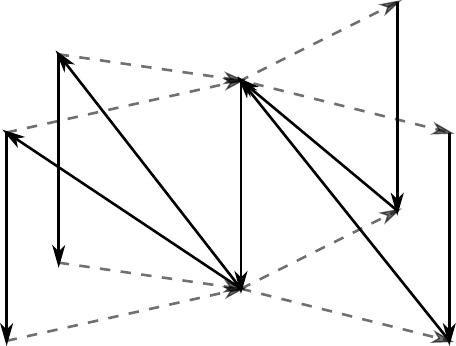}}%
    \put(0.50098888,0.61772477){\color[rgb]{0,0,0}\makebox(0,0)[lt]{\lineheight{1.25}\smash{\begin{tabular}[t]{l}$\alpha_-$\end{tabular}}}}%
    \put(0.49727241,0.0290364){\color[rgb]{0,0,0}\makebox(0,0)[lt]{\lineheight{1.25}\smash{\begin{tabular}[t]{l}$\alpha_+$\end{tabular}}}}%
  \end{picture}%
\endgroup%

    \caption{The graph $\Delta_*$}
    \label{fig:DL}
\end{figure}
If $g$ is an automorphism of $\Delta$ then we define $g_*: \V\Delta_*\to \V\Delta_*$ by setting $\alpha_-g_*=(\alpha g)_-$ and $\alpha_+g_*=(\alpha g)_+$ and it is clear that $g_*$ is an automorphism of $\Delta_*$. Note that, in general, there does not exist an automorphism of $\Delta_*$ mapping $\alpha_-$ to $\alpha_+$, so we require an extra condition.
The {\em reverse digraph} $\Delta^R$ of $\Delta$ has the same vertex set as $\Delta$ and $(\alpha, \beta)$ is an arc in $\Delta^R$ if and only if $(\beta, \alpha)$ is an arc in $\Delta$.  The extra condition we are imposing is that $\Delta$ is isomorphic to its reverse digraph $\Delta^\R$ via some graph isomorphism $f: \Delta\to \Delta^\R$.  Define $f_*: \V\Delta_*\to \V\Delta_*$ by setting $\alpha_-f_*=(\alpha f)_+$ and $\alpha_+f_*=(\alpha f)_-$.  Clearly $f_*$ is an automorphism of the undirected graph $\Gamma$ underlying $\Delta_*$.  Set $H=\{g_*\mid g\in \aut(\Delta)\}$ and $G=\langle H, f_*\rangle$.  The group $G$ is a group of automorphisms of the undirected graph $\Gamma$ and the graph $\Gamma$ is trivalent.  If the stabilizer in $\aut(\Delta)$ of a vertex in $\Gamma$ is infinite, e.g.\ if $\Delta$ is highly-arc-transitive,  then stabilizers of vertices in $G$ are infinite and we have an action that satisfies the conditions in Case {\bf C}.

An example of a highly-arc-transitive digraph $\Delta$ like the one described is the digraph with vertex set $\Z\times \{0,1\}$, where $((i,j), (i', j'))$ is an arc if and only if $i'=i+1$.  The construction then gives the graph $\Gamma$ from Figure \ref{fig:trivalent1}.  This graph has two ends.
\begin{figure}
    \centering
    \includegraphics[scale=0.5]{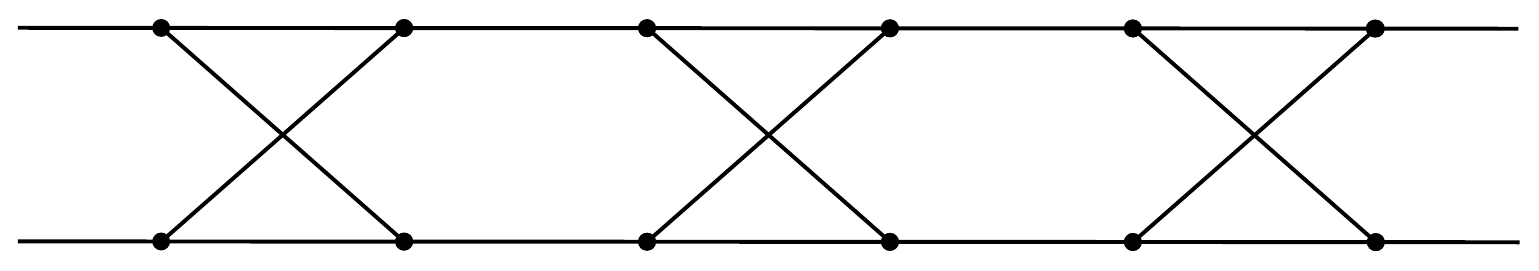}
    \caption{A 2-ended graph satisfying Case {\bf C}}
    \label{fig:trivalent1}
\end{figure}

If we take $\Delta$ as the digraph constructed in Example 1 in \cite{Moller2002a} we get an example of a trivalent graph that satisfies the conditions in Case {\bf C} but has only one end.  (The underlying undirected graph of $\Delta$ is the Diestel-Leader graph $DL(2,2)$. Diestel-Leader graphs have been discussed by various authors, see e.g.~\cite{DiestelLeader2001} and \cite{Woess2005}.)  The construction in Example 1 in \cite{Moller2002a} can be adapted to provide more examples of highly-arc-transitive digraphs that can be used in the above construction.
\end{enumerate}
\end{example}

The following proposition provides further examples of graphs and groups satisfying the conditions in Case {\bf C}.  

\begin{proposition}\label{prop:locally-dihedral}
Let $G$ be a group acting vertex-transitively on a connected, locally finite graph $\Gamma$ of degree $d$.
Suppose that the local action of $G$ on $\Gamma$ is the dihedral group with $2d$ elements in its natural action on a set with $d$ elements. Then, there is a connected trivalent graph $\Delta$ satisfying the following.
\begin{enumerate}
    \item The group $G$ has a vertex-transitive action on $\Delta$.
    \item If the action of $G$ on $\Gamma$ is not discrete, then the action on $\Delta$ is not discrete and satisfies the conditions of Case {\bf C}.
    \item There is a $G$-congruence $\sigma$ on $\V\Delta$ such that $\Gamma=\Delta/\sigma$ and the subgraph in $\Delta$ spanned by each $\sigma$ class is a $d$-gon.
\end{enumerate}
\end{proposition}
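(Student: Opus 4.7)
My plan is to build $\Delta$ by blowing up each vertex of $\Gamma$ into a $d$-gon. Take $\V\Delta := \A\Gamma$, so each vertex of $\Delta$ is an arc $(\alpha,\beta)$ of $\Gamma$. The hypothesis that the local action is the dihedral group $D_d$ in its natural action on $d$ points means that $G_\alpha$ preserves a canonical cyclic structure on $N(\alpha)$: for $d\geq 4$ this is the unique $G_\alpha$-orbit of size $d$ on unordered pairs from $N(\alpha)$, and for $d=3$ all three pairs form the triangle. Declare $(\alpha,\beta)$ adjacent in $\Delta$ to $(\alpha,\gamma)$ whenever $\{\beta,\gamma\}$ is a side of this $d$-gon (a \emph{cycle edge}), and to $(\beta,\alpha)$ for every edge $\{\alpha,\beta\}\in\E\Gamma$ (a \emph{link edge}). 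Each vertex of $\Delta$ then has exactly $2+1=3$ neighbours, so $\Delta$ is trivalent, and connected because $\Gamma$ is.

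Next I let $G$ act via $g\cdot(\alpha,\beta) := (g\alpha, g\beta)$. Cycle edges are preserved because $g|_{N(\alpha)}$ is a dihedral symmetry carrying the $d$-gon on $N(\alpha)$ onto that on $N(g\alpha)$; link edges are visibly preserved. Vertex-transitivity on $\V\Delta$ follows from vertex-transitivity of $G$ on $\V\Gamma$ combined with transitivity of $G_\alpha$ on $N(\alpha)$, yielding (1). For (3), define $\sigma$ by $(\alpha,\beta)\sim_\sigma(\alpha',\beta')\iff \alpha=\alpha'$. Its classes are the fibres $\{(\alpha,\beta):\beta\in N(\alpha)\}$, each spanning a $d$-gon by construction, and two distinct fibres are joined in $\Delta/\sigma$ exactly when the corresponding vertices of $\Gamma$ are adjacent, so $\Delta/\sigma\cong\Gamma$.

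For (2), suppose $G_\alpha$ is infinite. Since its image in $\Sym(N(\alpha))$ has order $2d$, the kernel $K_\alpha$ of this action on $N(\alpha)$ is infinite, and $K_\alpha$ sits inside the stabilizer of every $(\alpha,\beta)\in\V\Delta$, so the action on $\Delta$ is not discrete. Cycle edges and link edges form two distinct $G$-orbits, distinguished by whether their endpoints share a $\sigma$-class, and each is a single orbit: cycle edges by dihedral transitivity on the sides of the $d$-gon, link edges by arc-transitivity of $G$ on $\Gamma$ (itself a consequence of transitivity of $D_d$ on $N(\alpha)$). At $(\alpha,\beta)$ the stabilizer fixes the link-neighbour $(\beta,\alpha)$ and swaps the two cycle-neighbours via the reflection in $\mathrm{Stab}_{D_d}(\beta)$, so the local action on $\Delta$ is cyclic of order two and the whole configuration matches Case~\textbf{C}.

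The main obstacle is the canonicity and $G$-equivariance of the cyclic structure on each $N(\alpha)$, which is what makes cycle edges globally well-defined and preserved by $G$. This is where the dihedral hypothesis is essential: for $d\geq 4$ the $d$-gon is recovered as the unique $G_\alpha$-orbit of size $d$ on $2$-subsets of $N(\alpha)$, while the degenerate range $d\leq 2$, in which the stated hypothesis becomes vacuous or non-faithful, is tacitly excluded.
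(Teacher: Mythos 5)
Your construction is the same blow-up the paper uses (the vertices of $\Delta$ are the arcs of $\Gamma$, with one edge joining each arc to its reverse and a $d$-gon of ``cycle'' edges inside each fibre over a vertex of $\Gamma$), and your verifications of (1)--(3), of the two edge orbits, and of the local action on $\Delta$ being of order two would all go through once the cycle edges are well defined and $G$-invariant. But the step you yourself single out as the crux is wrong: for $d\geq 5$ there is no canonical $G_\alpha$-invariant $d$-gon on $N(\alpha)$, and in particular it is not ``the unique $G_\alpha$-orbit of size $d$ on $2$-subsets''. For $d=5$ the dihedral group of order $10$ has two orbits of size $5$ on pairs (the pentagon and the pentagram), so uniqueness fails; for $d=6$ both the distance-$1$ and the distance-$2$ pairs form orbits of size $6$, and the latter is a disjoint union of two triangles, so an orbit of size $d$ need not even be a $d$-gon --- choosing it would already ruin (3). (In general the number of invariant $d$-gon structures is $\phi(d)/2$, so canonicity only holds for $d\in\{3,4,6\}$.) Consequently your recipe does not determine the cycle edges, and if one simply picks some size-$d$ orbit at each vertex independently, nothing forces these choices to be permuted by $G$, so the asserted $G$-action on $\Delta$ need not exist.

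The gap is exactly what the paper's proof is designed to avoid: there one fixes a single vertex $\alpha$, chooses $r_\alpha\in G_\alpha$ acting on $N(\alpha)$ as a $d$-cycle, declares $\{(\alpha,\beta),(\alpha,\beta r_\alpha)\}$ to be a blue (cycle) edge, and defines the blue edges to be the entire $G$-orbit of this one edge. Invariance under $G$ is then automatic, and since $r_\alpha$ is a $d$-cycle the blue edges inside each fibre form a single $d$-gon (the relevant distance class has step coprime to $d$); equivalently, one may choose one invariant $d$-gon at a base vertex and transport it by $G$, which is well defined because $G_\alpha$ preserves it. With that replacement for your ``canonical $d$-gon'' the remainder of your argument --- trivalence, vertex-transitivity, the congruence $\sigma$, non-discreteness via the infinite kernel $K_\alpha$, and the Case {\bf C} conditions --- is correct and matches the paper.
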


\begin{proof}
Define a graph $\Delta$ as follows:  The vertex set is the set of arcs of $\Gamma$.   Two arcs in $\Gamma$ (i.e.\ vertices in $\Delta$) are connected by a red edge if they are reverse to each other.   For a fixed vertex $\alpha$ in $\Gamma$ we choose an element $r_\alpha \in G_\alpha$ so that $r_\alpha$ acts on $N(\alpha)$, the neighbourhood of $\alpha$, as a $d$-cycle.  If a vertex $\beta$ is adjacent to $\alpha$ then we say that $\{(\alpha,\beta), (\alpha,\beta r_\alpha)\}$ is a blue edge in $\Delta$ and so are all the elements in the $G$-orbit of $\{(\alpha,\beta), (\alpha,\beta r_\alpha)\}$.  (Loosely speaking we can say that $\Delta$ is the graph we get if we replace each vertex in $\Gamma$ with a $d$-cycle with blue edges and then connect  $d$-cycles corresponding to adjacent vertices with red edges.)

Clearly the graph $\Delta$ we constructed is a trivalent graph on which $G$ acts vertex-transitively.  If the action on $\Gamma$ is not discrete then the action of $G$ on $\Delta$ is not discrete and satisfies the conditions in Case {\bf C}.
\end{proof}

The construction in the proof of Proposition~\ref{prop:locally-dihedral} is \lq\lq reversible\rq\rq:  Suppose $G$ acts 
vertex-transitively on a trivalent graph $\Delta$ such that the conditions in Case {\bf C} are satisfied.   Consider the \lq\lq blue  subgraph\rq\rq\ with the same vertex set as $\Gamma$ and edge set the set of blue edges. This subgraph is $2$-regular, so there is a number $d\geq 3$ such that each connected component is a $d$-gon  for some $d \geq 3$ or every component is a line. In the first case, by contracting in $\Gamma$ each  connected component of the blue subgraph to a vertex, we get a vertex-transitive graph on which $G$ acts  and the local action is a dihedral group. In the second case we could also contract each blue line to a vertex and get a graph on which $G$ acts vertex-transitively.  But in this case the graph would not be locally finite and the group would act locally like the infinite dihedral group.  

\begin{remark}
In \cite{Nebbia2013} Nebbia studies non-discrete, vertex-transitive actions on the 3-regular tree and describes the same division into cases as above.
\end{remark}

\section{Vertex- and edge-transitive actions}

In this section we show that in Cases {\bf A} and {\bf B} the graph $\Gamma$ is the 3-regular tree.  This is proved by the methods used by Tutte in \cite{Tutte1959}.

\begin{construction}
The following construction is used in the proofs of Theorem~\ref{thm:trivalentA} and \ref{thm:trivalentB}, and also in subsequent sections of this paper.  The {\em arc-digraph} of $\Gamma$ (sometimes called the {\em line graph} or the {\em parital line graph}) is denoted with $B_1(\Gamma)$.  The set of vertices is the set of arcs in $\Gamma$, i.e.\ $\V B_1(\Gamma)=\A\Gamma$,  and if $(\alpha, \beta), (\gamma, \delta)$ are arcs in $\Gamma$ then $((\alpha, \beta), (\gamma, \delta))$ is an arc in $B_1(\Gamma)$ if and only if $\beta=\gamma$.  The $s$-arc-digraph $B_s(\Gamma)$ is defined such that
the set of vertices of $B_s(\Gamma)$ is the set of $s$-arcs of $\Gamma$ and
the arcs in $B_s(\Gamma)$ are  pairs $((\alpha_0,\dots,\alpha_{s}),(\alpha_1,\dots,\alpha_{s+1}))$, where $(\alpha_0,\dots,\alpha_{s+1})$ is a $(s+1)$-arc in $\Gamma$.
It is easy to see that if $s\geq 2$ then $B_s(\Gamma)=B_1(B_{s-1}(\Gamma))
$.
Note  that if $\Gamma$ is connected then $B_1(\Gamma)$ is also connected and by induction one sees that $B_s(\Gamma)$ is connected.
\end{construction}

We now prove our main theorem regarding Case {\bf A}.

\begin{theorem}\label{thm:trivalentA}  Suppose a group $G\leq\aut(\Gamma)$ acts vertex- and arc-transitively on a connected trivalent graph $\Gamma$ and that the stabilizers of vertices are infinite.  Then $\Gamma$ is a tree.

Furthermore, $G$ acts transitively on the set of $s$-arcs for any $s\geq 1$ and if $G$ is a closed subgroup of $\aut(\Gamma)$, then $G$ acts 2-transitively on the ends  of $\Gamma$.
\end{theorem}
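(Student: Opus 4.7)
The plan is threefold: first, induct on $s$ to show that $G$ is $s$-arc-transitive for every $s\geq 1$; second, use this to deduce that $\Gamma$ must be a tree; third, when $G$ is closed, conclude $2$-transitivity on ends from Lemma~\ref{lem:infinte-arcs}. For the inductive step, fix an $s$-arc $P=(\alpha_0,\ldots,\alpha_s)$ and consider the two possible extensions, i.e.\ the elements of $N(\alpha_s)\setminus\{\alpha_{s-1}\}$. Since the local action at $\alpha_s$ is $\Sym(3)$, the one-step stabilizer $G_{\alpha_{s-1},\alpha_s}$ acts on these two extensions as a transposition, and hence $G_P\leq G_{\alpha_{s-1},\alpha_s}$ acts there either trivially or as $C_2$. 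Granted $s$-arc-transitivity, $(s+1)$-arc-transitivity is exactly the second alternative. Suppose for contradiction the action is trivial; by $s$-arc-transitivity it is then trivial for every $s$-arc in $\Gamma$. Let $G_\alpha^{[k]}$ denote the pointwise stabilizer of the ball of radius $k$ around a fixed vertex $\alpha$; then $G_\alpha/G_\alpha^{[k]}$ embeds into the finite symmetric group of that ball, so each $G_\alpha^{[k]}$ is an infinite, finite-index subgroup of $G_\alpha$. The triviality assumption forces $G_\alpha^{[s]}=G_\alpha^{[s+1]}$: any $g\in G_\alpha^{[s]}$ fixes every $s$-arc from $\alpha$ pointwise, so by assumption fixes $N(\alpha_s)$ pointwise for each such endpoint $\alpha_s$, and these neighborhoods together cover the sphere of radius $s+1$ about $\alpha$. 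Iterating yields $G_\alpha^{[s]}=G_\alpha^{[s+k]}$ for every $k\geq 0$, and by connectedness this common subgroup is trivial, contradicting that it has finite index in the infinite group $G_\alpha$. Hence $(s+1)$-arc-transitivity holds.

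Knowing that $G$ acts transitively on $s$-arcs for every $s$, I would next show $\Gamma$ is a tree. First, $\Gamma$ must be infinite, because the automorphism group of a finite graph is finite, incompatible with infinite $G_\alpha$. Consequently, for every $g\geq 1$ there exist vertices at distance $g$ from any fixed vertex $\beta_0$, and a geodesic between them furnishes a $g$-arc $(\beta_0,\ldots,\beta_g)$ with $d_\Gamma(\beta_0,\beta_g)=g$. If the girth of $\Gamma$ were finite, say $g$, then traversing a shortest cycle once would give a $g$-arc $(\alpha_0,\ldots,\alpha_g)$ with $\alpha_0=\alpha_g$. These two $g$-arcs have endpoints at different distances and so cannot be related by any graph automorphism, contradicting $g$-arc-transitivity. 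Therefore $\Gamma$ has no cycles and, being connected and trivalent, is the $3$-regular tree.

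For the final assertion I would exploit that in the $3$-regular tree each $2$-way infinite arc traces out a unique line and thereby determines an ordered pair of distinct ends, while conversely every ordered pair of distinct ends is realized by the unique line joining them. Thus transitivity of $G$ on $2$-way infinite arcs is equivalent to $2$-transitivity of $G$ on $\Omega\Gamma$. For closed $G$, the first step makes $G$ highly-arc-transitive, and Lemma~\ref{lem:infinte-arcs} then supplies transitivity on $2$-way infinite arcs. The main obstacle is the induction step for $s$-arc-transitivity, where the Tutte-style propagation of pointwise stabilizers must be arranged so that the infinite-vertex-stabilizer hypothesis is used exactly to rule out the stagnation $G_\alpha^{[s]}=G_\alpha^{[s+1]}=\cdots$ that would otherwise force the action to be discrete.
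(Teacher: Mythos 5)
Your proposal is correct and follows essentially the same Tutte-style argument as the paper: failure of $(s+1)$-arc-transitivity forces the pointwise stabilizer of an $s$-arc to fix its two extensions, and this fixity propagates (you push it outward through the ball stabilizers $G_\alpha^{[k]}$, the paper through the connected arc-digraph $B_s(\Gamma_+)$) until vertex stabilizers are forced to be finite, contradicting the hypothesis. The remaining steps — your explicit girth/distance argument that $s$-arc-transitivity for all $s$ rules out cycles (which the paper leaves implicit) and the deduction of 2-transitivity on ends from Lemma~\ref{lem:infinte-arcs} via the correspondence between 2-way infinite arcs and ordered pairs of ends — coincide with the paper's route.
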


\begin{proof}   
This result can be proved by adapting Tutte's proof of (2.2) from \cite{Tutte1959}, but instead we give a proof that uses the concept of a $s$-arc digraph defined above and the method of the proof of Lemma 4.3.2 in \cite{GodsilRoyle2001}.
Let $\Gamma_+$ be a digraph with vertex set $\V\Gamma$ and arc set $\A\Gamma$, so if $\{\alpha, \beta\}$ is an edge in $\Gamma$ then both $(\alpha, \beta)$ and $(\beta, \alpha)$ are arcs in $\Gamma_+$.  Our assumptions say that $G$ acts 1-arc-transitively on $\Gamma_+$.  Suppose there is some $s$ such that $G$ acts $s$-arc-transitively on $\Gamma_+$ but is not $(s+1)$-arc-transitive.  Consider the $s$-arc digraph $B_s(\Gamma_+)$.  This is a digraph where the in- and out-degrees are both equal to 2 and the natural action of $G$ on $B_s(\Gamma_+)$ is vertex transitive and faithful.  Let $P$ be a vertex in $B_s(\Gamma_+)$ corresponding to some $s$-arc $(\alpha_0, \ldots, \alpha_s)$ in $\Gamma_+$.  Denote with $P'=(\alpha_1, \ldots, \alpha_s, \alpha'_{s+1})$ and $P''=(\alpha_1, \ldots, \alpha_s, \alpha''_{s+1})$ the two predecessors of $P$ in $\Gamma_+$.  The $s$-arcs $P'$ and $P''$ are the two out-neighbours of $P$ in $B_s(\Gamma_+)$.  If the group $G_P$ contains an element $g$ that transposes $P'$ and $P''$ and $Q=(\beta_0.\ldots, \beta_s, \beta_{s+1})$ is some $(s+1)$-arc then we can find an element $h$ that takes the $s$-arc $(\beta_0.\ldots, \beta_s)$ to the $s$-arc $P$ and then $h$ takes $Q$ to either $P'$ or $P''$.  If $Qh=P''$ then $Qhg=P'$.  From this we conclude that $G$ must act transitively on the set of $(s+1)$-arcs contrary to the assumption.  Thus $G_P$ must fix the two out-neighbours of $P$ in $B_s(\Gamma_+)$.  A similar argument shows that $G_P$ must also fix the two in-neighbours of $P$ in $B_s(\Gamma_+)$.   Thus $G_P$ fixes all four neighbours of $P$ in $B_s(\Gamma_+)$ and, since $B_s(\Gamma_+)$ is connected, we see that $G_P$ acts trivially on $B_s(\Gamma_+)$.  But then $G_P=G_{\alpha_0,\ldots, \alpha_s}$ acts trivially on $\Gamma_+$ and $G_{\alpha_0}$ is finite, contrary to hypothesis.  Now we can conclude that $G$ acts transitively on the set of $s$-arcs for any $s\geq 1$, and we see that $\Gamma$ is a tree.

From Lemma~\ref{lem:infinte-arcs} it follows that $G$ acts transitively on the set of all 2-way infinite arcs in $\Gamma$. That in turn implies that $G$ acts 2-transitively on the set of ends of $\Gamma$.
\end{proof}

\begin{remark}
There are infinitely many examples of simple groups of the type described in the above theorem, see the paper by Caprace and Radu \cite[Remark A4]{CapraceRadu2020}.  
\end{remark}

Next we consider Case {\bf B}.

\begin{theorem}\label{thm:trivalentB}  Suppose $\Gamma$ is a connected trivalent graph and $G\leq \aut(\Gamma)$ acts vertex- and edge-transitively, but not arc-transitively, on $\Gamma$ and the stabilizers in $G$ of vertices are infinite.  Then $\Gamma$ is a tree. 

Furthermore, let $\Gamma_+$ denote the digraph that has the same vertex set as $\Gamma$ and the set of arcs is one of the arc-orbits of $G$ on $\Gamma$. Then $G$ acts highly-arc-transitively on $\Gamma_+$.
    In particular, $G$ fixes an end $\omega$ of $\Gamma$ and, if $G$ is a closed subgroup of $\aut(\Gamma)$, then $G$ acts transitively on $\Omega\Gamma\setminus \{\omega\}$.
\end{theorem}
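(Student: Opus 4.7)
The plan is to adapt the proof of Theorem~\ref{thm:trivalentA}, accounting for the asymmetric arc-orbit structure specific to Case~\textbf{B}. First I would pin down the structure of $\Gamma_+$: since $G$ is edge-transitive with exactly two arc-orbits, for each edge of $\Gamma$ exactly one of its two arcs lies in $\Gamma_+$, so $\Gamma_+$ is an orientation of $\Gamma$. Vertex-transitivity forces constant in- and out-degrees $d^-$, $d^+$ with $d^-+d^+=3$, and combined with the Case~\textbf{B} local action (the stabilizer $G_\alpha$ fixes one neighbour and swaps the other two, and the swapped pair must be of the same type relative to $\Gamma_+$) this forces $\{d^+,d^-\}=\{1,2\}$. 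The two choices of arc-orbit produce digraphs that are reverses of each other, so without loss of generality I take $d^+=2$ and $d^-=1$; the highly-arc-transitivity statement for the other orbit follows by arc-reversal.

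Next I would establish highly-arc-transitivity on $\Gamma_+$ by induction on $s$, using the strategy of the proof of Theorem~\ref{thm:trivalentA}. The case $s=1$ is built into the definition of $\Gamma_+$. For the inductive step, consider $B_s(\Gamma_+)$, which is connected and has out-degree $2$, in-degree $1$ at every vertex; $s$-arc-transitivity of $G$ on $\Gamma_+$ is equivalent to vertex-transitivity of $G$ on $B_s(\Gamma_+)$. For any vertex $P=(\alpha_0,\ldots,\alpha_s)$ the stabilizer $G_P$ automatically fixes the unique in-neighbour of $P$ in $B_s(\Gamma_+)$ and acts on the two out-neighbours. If $G_P$ swaps them, then $(s+1)$-arc-transitivity follows immediately. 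Otherwise, by vertex-transitivity of $G$ on $B_s(\Gamma_+)$, for \emph{every} vertex $Q$ of $B_s(\Gamma_+)$ the stabilizer $G_Q$ fixes all three neighbours of $Q$; propagating along paths in the connected graph $B_s(\Gamma_+)$ one deduces that $G_P$ acts trivially on $\V B_s(\Gamma_+)$, i.e.~$G_P$ pointwise fixes every $s$-arc of $\Gamma_+$ and thus every vertex of $\Gamma$. Then $G_{\alpha_0}$ embeds into the permutation group of the $2^s$ $s$-arcs of $\Gamma_+$ starting at $\alpha_0$, contradicting the infinitude of $G_{\alpha_0}$.

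For the remaining conclusions, if $\Gamma$ contained a cycle of length $c$ then the in-degree-$1$ condition prevents any cycle vertex from receiving two cycle-edges as in-arcs, which together with the cyclic structure forces the cycle to be consistently oriented in $\Gamma_+$; a closed $c$-arc around the cycle and an open $c$-arc exiting via the other out-arc at some cycle vertex (available since $d^+=2$) would then lie in distinct $G$-orbits, contradicting $c$-arc-transitivity, so $\Gamma$ is a tree. In this tree, iterating the unique in-neighbour from any vertex yields a $1$-way infinite arc; any two such rays eventually coincide, representing a single end $\omega$ that $G$ fixes because it preserves $\Gamma_+$. Finally, if $G$ is closed in $\aut(\Gamma)$ then it is also closed in $\aut(\Gamma_+)$, so Lemma~\ref{lem:infinte-arcs} gives transitivity of $G$ on the $2$-way infinite arcs of $\Gamma_+$; each such arc is the unique oriented line in $\Gamma$ from $\omega$ to some $\omega'\neq\omega$, so this transitivity descends to transitivity on $\Omega\Gamma\setminus\{\omega\}$. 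The main obstacle I anticipate is the propagation step in the ``$G_P$ fixes all out-neighbours'' subcase: one must use vertex-transitivity on $B_s(\Gamma_+)$ to globalize the ``$G_Q$ fixes all neighbours of $Q$'' property to every $Q$, and then carefully iterate along $B_s(\Gamma_+)$ to conclude that $G_P$ acts trivially, before translating this into the finiteness bound for $G_{\alpha_0}$.
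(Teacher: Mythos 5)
Your proof is correct, and its core is the same as the paper's: the paper also establishes highly-arc-transitivity on $\Gamma_+$ by the Tutte-style argument on $B_s(\Gamma_+)$ (the unique in-neighbour is automatically fixed, a swap of the two out-neighbours would give $(s+1)$-arc-transitivity, and otherwise the stabilizer of an $s$-arc fixes all neighbours, propagates over the connected digraph $B_s(\Gamma_+)$, and forces finite vertex stabilizers), and it also deduces the statement about ends from Lemma~\ref{lem:infinte-arcs} applied to $\Gamma_+$. Where you genuinely differ is the tree claim and the order of deduction: you first prove highly-arc-transitivity (which indeed never uses acyclicity) and then rule out a cycle of length $c$ by exhibiting a ``closed'' $c$-arc around the necessarily directed cycle and an ``open'' $c$-arc leaving it through the second out-arc, which cannot lie in one orbit since automorphisms preserve whether the endpoints of a $c$-arc coincide. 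The paper instead kills cycles directly and more cheaply, before any arc-counting: a cycle must be directed (in-degree $1$), and an element of $G_\alpha$ swapping the two out-arcs at a cycle vertex $\alpha$ produces a second directed cycle through $\alpha$, so the union has a vertex of in-degree $2$. The paper's argument needs only the local action (not even infinite stabilizers), while yours needs the full strength of $c$-arc-transitivity; on the other hand your route shows cleanly that acyclicity is a formal consequence of high arc-transitivity plus the degree pattern. Both are valid, and your remaining steps (orientation and $\{1,2\}$ in/out-degrees, reduction to one orbit via arc-reversal, all backward rays defining one fixed end $\omega$, and identifying $2$-way infinite arcs with the oriented lines from $\omega$ to the other ends) match the paper's reasoning at the same level of detail.
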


\begin{proof}
Clearly $G$ has two orbits on the arcs of $\Gamma$.  Let $\Gamma_+$ be the digraph that has the same vertex set as $\Gamma$ and has one of the arc-orbits as a set of arcs.  We choose the orbit so that the in-degree is 1 and the out-degree is 2. Any cycle in  $\Gamma_+$ would have to be a directed cycle, since otherwise we would have a vertex with in-degree 2.  If $\alpha$ is a vertex in a directed cycle in $\Gamma_+$ then an automorphism fixing $\alpha$ and taking one of the outgoing arcs to the other will move our directed cycle to a different directed cycle that also includes $\alpha$.   This leads to a contradiction because the subdigraph consisting of these two cycles will then have a vertex with in-degree 2.  Thus $\Gamma$ cannot contain a cycle and $\Gamma$ is therefore a tree.

The part about the action on $\Gamma_+$ being highly-arc-transitive is proved in the same way as in the last theorem.  Suppose $G$ acts $s$-arc-transitively on $\Gamma_+$ but not $(s+1)$-arc-transitively.  Then $G$ acts vertex-transitively on the $s$-arc-digraph $B_s(\Gamma_+)$ but, as in the proof of the last theorem, we see that the stabilizer of a vertex in $B_s(\Gamma_+)$ must fix all three neighbouring vertices and thus the stabilizer in $G$ of a vertex in $B_s(\Gamma_+)$ acts trivially on the whole graph $B_s(\Gamma_+)$.   Thus the stabilizer in $G$ of a vertex in $\Gamma$ is a finite group.  Now we have reached a contradiction and conclude that $G$ must be highly-arc-transitive.

From Lemma~\ref{lem:infinte-arcs} it follows that $G$ acts transitively on the set of 2-way infinite-arcs in the digraph $\Gamma_+$.  Given a vertex $\alpha_0$ there is a unique arc $(\ldots, \alpha_{-1}, \alpha_0)$ and the end $\omega$ that contains the ray $\ldots, \alpha_{-1}, \alpha_0$ is fixed by the automorphism group.   Thus 
 $G$ fixes one point $\omega$ in the boundary and acts transitively on $\Omega\Gamma\setminus\{\omega\}$.
\end{proof}

\begin{corollary}\label{cor:tree}
Suppose $\Gamma$ is a connected trivalent graph.  Suppose $G\leq \aut(\Gamma)$ acts  
vertex- and edge-transitively on $\Gamma$ and the stabilizers in $G$ of vertices in $\Gamma$ are infinite.  Then $\Gamma$ is the 3-regular tree.
\end{corollary}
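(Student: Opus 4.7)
The plan is to deduce the corollary immediately from the two preceding theorems by splitting into cases according to whether the action of $G$ on the arcs is transitive. Since $G$ acts edge-transitively on $\Gamma$, for any edge $\{\alpha,\beta\}$ the group $G$ has either one or two orbits on the two arcs $(\alpha,\beta)$ and $(\beta,\alpha)$ it gives rise to, so either $G$ is arc-transitive on $\Gamma$ or $G$ has exactly two orbits on $\A\Gamma$.

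In the first case, the hypotheses of Theorem~\ref{thm:trivalentA} are satisfied: $G$ acts vertex- and arc-transitively on the connected trivalent graph $\Gamma$ with infinite vertex stabilizers. That theorem gives at once that $\Gamma$ is a tree, and being a connected trivalent tree it is the $3$-regular tree $T_3$. In the second case, $G$ is vertex- and edge-transitive but not arc-transitive on $\Gamma$, with infinite vertex stabilizers; so the hypotheses of Theorem~\ref{thm:trivalentB} hold, and $\Gamma$ is again a tree, hence the $3$-regular tree.

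In neither case is there any real work to do beyond invoking the theorems just established: the only thing one must check is that the case analysis for edge-transitive actions is exhaustive, and this is immediate from the fact that each edge contributes exactly two arcs. Thus the only potential subtlety — and it is a very mild one — is the reminder that a connected trivalent graph without cycles is forced to be $T_3$, which is standard.
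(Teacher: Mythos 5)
Your proposal is correct and is exactly the intended argument: the paper states the corollary without proof as an immediate consequence of Theorems~\ref{thm:trivalentA} and~\ref{thm:trivalentB}, splitting on whether the edge-transitive action is arc-transitive, and in either case a connected trivalent tree is the $3$-regular tree.
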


\begin{remark} $  $
\begin{enumerate}
    \item Theorem~\ref{thm:trivalentB} could also be proved by referring to \cite[Proposition 22]{ArnadottirLederleMoller2020a}.  That result says that if a group acts vertex- and arc-transitively on a locally finite digraph and the in- and out-degrees are coprime, then the action is highly-arc-transitive and the subdigraph spanned by the set of descendants of a vertex is a tree.
    \item In his study of non-discrete, vertex-transitive actions on the regular trivalent tree Nebbia gets the same conclusions about arc-transitivity as in the above theorems but he assumes from the start that the graph is a tree, see \cite[Proposition~3.1]{Nebbia2013}.
\end{enumerate}
\end{remark}

\section{The non-edge transitive case}

Now we turn our attention to Case {\bf C}, i.e.~$\Gamma$ is a connected trivalent graph and $G$ a subgroup of $\aut(\Gamma)$ that acts vertex-transitively on $\Gamma$ with two orbits on the edges and infinite vertex stabilizers.  As described in Section~\ref{sec:three-cases}, we think of the edges of our graph $\Gamma$ as being coloured red or blue according to which orbit they belong to.  Choose the colouring so that  each vertex is adjacent to precisely one red edge and precisely two blue edges.   An arc inherits its colour from the edge that gives rise to it.

\begin{definition}
  Let $S=(\alpha_0,\dots,\alpha_s)$ be an $s$-arc in $\Gamma$. Then $\alpha_0$ is called the \emph{head} and $\alpha_s$ the \emph{tail} of $S$.   The {\em reverse} of the arc $S=(\alpha_0,\dots,\alpha_s)$ is the arc $\overline{S}=(\alpha_s,\dots,\alpha_0)$.
  We call $S$ \emph{alternating} if consecutive arcs are in different $G$-orbits. If $(\alpha_0,\alpha_1)$ is red and $(\alpha_{s-1},\alpha_s)$ is blue we call $S$ an \emph{rb-alternating s-arc}; {\em rr-alternating s-arcs}, {\em br-alternating s-arcs} and {\em bb-alternating s-arcs} are defined in the obvious way.
\end{definition}

Note that rr-alternating and bb-alternating arcs have odd length and rb-alternating and br-alternating arcs have even length.  The partition of alternating $s$-arcs into rb-, rr-, br- and bb-alternating s-arcs is invariant under automorphisms of $\Gamma$.  

\begin{remark}
If $G$ acts transitively on the set of bb-alternating $s$-arcs, then $G$ acts transitively on  the set of rb-alternating $(s+1)$-arcs, the set of br-alternating $(s+1)$-arcs and the set of rr-alternating $(s+2)$-arcs.
\end{remark}

\begin{definition}
  An alternating $s$-arc $S'$ is a \emph{predecessor} of an alternating $s$-arc $S$ if there exists an alternating $(s+1)$-arc $(\alpha_0,\dots,\alpha_{s+1})$ with $S=(\alpha_0,\dots,\alpha_s)$ and $S'=(\alpha_1,\dots,\alpha_{s+1})$. We also say that $S$ is a \emph{successor} of $S'$.
  
  An alternating $s$-arc $S$ is said to be \emph{accessible} from an alternating $s$-arc $S'$ if there exists a finite sequence $S_0,\dots,S_n$ of alternating $s$-arcs such that $S=S_0$, $S'=S_n$ and for all $0 \leq i \leq n-1$ the $s$-arc $S_{i+1}$ is a predecessor or a successor of $S_i$.
  
  Similarly, an alternating $s$-arc $S'$ is a \emph{$2$-predecessor} of an alternating $s$-arc $S$ if there exits an alternating $(s+2)$-arc $(\alpha_0,\dots,\alpha_{s+2})$ with $S=(\alpha_0,\dots,\alpha_s)$ and $S'=(\alpha_2,\dots,\alpha_{s+2})$. In this situation we also say that $S$ is a \emph{$2$-successor} of $S'$.
  
  An alternating $s$-arc $S$ is said to be \emph{$2$-accessible} from an alternating $s$-arc $S'$ if there exists a finite sequence $S_0,\dots,S_n$ of alternating $s$-arcs such that $S=S_0$, $S'=S_n$ and for all $0 \leq i \leq n-1$ the $s$-arc $S_{i+1}$ is a $2$-predecessor or a $2$-successor of $S_i$.
\end{definition}

Note that accessibility and $2$-accessibility are equivalence relations on $s$-arcs.
The following is a generalization of (2.1) in \cite{Tutte1959}.

\begin{lemma}\label{lem:accessibility}  $ $
\begin{enumerate}
    \item Let $S$ be an alternating $s$-arc and $\overline{S}$ its reverse.
Any alternating $s$-arc is accessible from $S$ or $\overline{S}$.
   \item Let $S$ be an rr-alternating $s$-arc. Any $rr$-alternating $s$-arc is $2$-accessible from $S$ or $\overline{S}$.
\end{enumerate}
\end{lemma}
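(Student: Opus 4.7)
My plan is to prove both parts via a sliding argument along alternating walks in $\Gamma$, using the connectedness of $\Gamma$ together with the local structure that each vertex has exactly one red and two blue neighbours.

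The key preliminary observation is that if $(\gamma_0,\dots,\gamma_{s+k})$ is any alternating $(s+k)$-arc in $\Gamma$, then the consecutive $s$-subarcs $(\gamma_i,\dots,\gamma_{i+s})$ for $0 \le i \le k$ are mutually accessible, because each consecutive pair $(\gamma_i,\dots,\gamma_{i+s})$, $(\gamma_{i+1},\dots,\gamma_{i+s+1})$ is a predecessor/successor pair by definition. Analogously, the $s$-subarcs of the form $(\gamma_{2j},\dots,\gamma_{2j+s})$ inside an alternating $(s+2k)$-arc are mutually $2$-accessible. Moreover, since every vertex has one red and two blue neighbours, an alternating $s$-arc always admits a continuation at either end (with two choices at each ``red-to-blue'' step and a unique one at each ``blue-to-red'' step), so every alternating $s$-arc extends to an infinite alternating walk in both directions.

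To prove part~(1), given the fixed arc $S$ and an arbitrary alternating $s$-arc $T$, I would proceed by induction on the graph distance in $\Gamma$ from the tail of $S$ to the head of $T$. In the base case this distance is zero, and the aim is to build a single alternating walk containing both $S$ and $T$ (or, if colours force it, $S$ and $\overline{T}$) as consecutive $s$-subarcs; sliding along this walk then yields the desired accessibility. For the inductive step, one extends $S$ by one alternating predecessor in the direction of the head of $T$, thereby reducing the distance and invoking the inductive hypothesis. Part~(2) follows by the same argument applied to $2$-step shifts, using that the colour pattern of an alternating $s$-arc is preserved by $2$-accessibility, so the argument stays cleanly inside the class of rr-alternating $s$-arcs.

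The main obstacle will be the colour matching at the gluing point: since an alternating walk cannot have two consecutive edges of the same colour, the last edge of $S$ and the first edge of the continuation towards $T$ must have opposite colours. When this constraint fails, the remedy is either to replace $T$ by $\overline{T}$, whose first edge has the opposite colour to that of $T$, or to route the connecting walk through the other blue neighbour available at a branching vertex. This colour-parity dichotomy is precisely what forces the conclusion to read ``accessible from $S$ or from $\overline{S}$'' rather than just from $S$, and the same phenomenon explains why the analogous statement in part~(2) again needs both $S$ and $\overline{S}$.
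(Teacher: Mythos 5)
Your overall strategy---slide along alternating walks and steer an extension of $S$ towards $T$---has a genuine gap at the inductive step. You claim one can ``extend $S$ by one alternating predecessor in the direction of the head of $T$, thereby reducing the distance.'' This is exactly what can fail: an alternating arc can only be extended at a given end by an edge whose colour is opposite to that of its current terminal edge, while the first edge of a geodesic from the tail of $S$ towards the head of $T$ may have the same colour (for instance, the last edge of $S$ is blue and the geodesic leaves through the other blue edge at that vertex). In that situation the only admissible continuation is along the red edge, which may increase the distance to the head of $T$, so the induction on distance is not well-founded. The remedies you propose do not repair this: replacing $T$ by $\overline{T}$ relocates the target vertex (the head of $\overline{T}$ is the tail of $T$), destroying the induction, and the claim that the first edge of $\overline{T}$ has the opposite colour to that of $T$ is false for rr- and bb-alternating arcs, whose first and last edges have the same colour; routing ``through the other blue neighbour'' likewise need not decrease the distance. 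The base case has the same unresolved difficulty: when the last edge of $S$ and the first edge of $T$ have the same colour (possibly being the same edge, so that $T$ backtracks along $S$), no concatenated alternating walk containing both exists, and you do not explain how accessibility is obtained in that case.

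This obstruction---alternating walks cannot follow arbitrary paths in $\Gamma$---is precisely what the paper's proof is organized to overcome, in two stages: first, any two alternating $s$-arcs through a common head vertex are accessible from one another or from a reverse, by a case analysis on the colours of their initial edges at that vertex (using an auxiliary arc ending in the red edge when both initial edges are blue); second, the set $A$ of vertices lying on arcs accessible from $S$ or $\overline{S}$ is all of $\V\Gamma$, shown by taking a vertex outside $A$ adjacent to one in $A$ and deriving a contradiction, again via a colour case analysis that detours through the red edge at the frontier vertex. Your proposal would need an argument of comparable substance for steering alternating walks towards a target; as written, part (1) is not established, and since your part (2) rests on the same construction it inherits the gap (the paper instead deduces (2) from (1) by converting a predecessor/successor chain into a $2$-predecessor/$2$-successor chain, inserting an intermediate arc whenever the chain switches direction).
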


\begin{proof}
1.  Let $W$ denote the set of all alternating $s$-arcs that are accessible from $S$ or $\overline{S}$.  Clearly $S, \overline{S} \in W$.   Note that if an alternating $s$-arc $S'$ is accessible from some alternating $s$-arc $S$ then $\overline{S'}$ is accessible from $\overline{S}$.

Let $\alpha$ be a vertex that belongs to some $S'\in W$.   Assume that $S''$ is an alternating $s$-arc containing $\alpha$.
We can assume that $\alpha$ is the head of both $S'=(\alpha_0,\dots,\alpha_s)$ and $S''=(\beta_0,\dots,\beta_s)$. If $\alpha_1=\beta_1$, then, by repeatedly taking successors we can find an $s$-arc $(\gamma_0,\ldots, \gamma_{s-2},\alpha_0,\alpha_1)$ from which both $S'$ and $S''$ are accessible and since $S'\in W$ we see that $S''$ is also in $W$. If $\alpha_1 \neq \beta_1$, there is a case distinction. Let $\gamma$ be the third neighbour of $\alpha$. If both $(\alpha,\alpha_1)$ and $(\alpha,\beta_1)$ are blue, then there exists an alternating $s$-arc ending  with the red arc $(\alpha,\gamma)$ from which both $S'$ and $S''$ are accessible. If one of $(\alpha,\alpha_1)$ and $(\alpha,\beta_1)$ is red and the other is blue, then $S''$ is clearly accessible from $\overline{S'}$.

Now consider the set $A$ of all the vertices that belong to some alternating $s$-arc in $W$.  Assume now that $\alpha$ is a vertex in $\Gamma$ but not in $A$ and that $\alpha$ is adjacent to some vertex $\beta$ in $A$.
Since $\beta\in A$  we know that $\beta$ is the head of some alternating $s$-arc $T$ in $W$. Say the edge $\{\beta, \delta\}$ belongs to $T$.

If the edges $\{\alpha, \beta\}$ and $\{\beta, \delta\}$ have different colours, then the vertex $\alpha$ clearly belongs to a successor of $T$ and is thus in $A$.
Suppose now that the edges $\{\alpha, \beta\}$ and $\{\beta, \delta\}$ have the same colour, i.e.~both are blue. Say $\{\beta,\gamma\}$ is the red edge incident with $\beta$.  Then $\gamma$ belongs to a successor of $T$ and we can find an alternating $s$-arc $T'$ that has $\beta$ as its tail and contains the vertex $\gamma$.  Clearly $\alpha$ belongs to a successor of $T'$, hence $\alpha \in A$, contradiction. This concludes the proof of the first part of the lemma.

2.  For the second part of the lemma, let $S$ and $S'$ be two rr-alternating $s$-arcs in $\Gamma$.
By the first part there is a sequence $S_1, S_2, \ldots, S_k$ such that $S_1=S$ or $S_1 = \overline{S}$ and $S_k=S'$ and for all $i=1, \ldots, k-1$ the alternating $s$-arc $S_{i+1}$ is either a predecessor or a successor of $S_i$.   If it so happens that $S_{i+1}$ is a predecessor of $S_i$ and $S_{i+2}$ is a predecessor of $S_{i+1}$ then $S_{i+2}$ is a $2$-predecessor of $S_i$ and, similarly, if $S_{i+1}$ is a successor of $S_i$ and $S_{i+2}$ is a successor of $S_{i+1}$ then $S_{i+2}$ is a $2$-successor of $S_i$.
If $S_{i+1}$ is a predecessor of $S_i$ and $S_{i+2}$ is a successor of $S_{i+1}$ then we let $S'_{i+1}$ be a predecessor of $S_{i+1}$ and note that then $S'_{i+1}$ is a 2-successor of $S_i$ and $S_{i+2}$ is a 2-successor of $S_{i+1}$.  In the case that  $S_{i+1}$ is a successor of $S_i$ and $S_{i+2}$ is a predecessor of $S_{i+1}$ can be handled similarly.  Thus we can construct a sequence of $s$-arcs starting with $S$ or $\overline{S}$ and ending with $S'$ such that each arc, except the first one, is the 2-predecessor or 2-successor of the previous one.
\end{proof}

\begin{lemma}\label{lem:rr transitivity}
Assume $G$ acts transitively on the set of rr-alternating $s$-arcs, but not on the set of rr-alternating $(s+2)$-arcs. Then, $G$ acts regularly on the set of rr-alternating $s$-arcs.
In particular $G$ has finite vertex stabilizers.
\end{lemma}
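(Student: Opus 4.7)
The strategy is to show that the stabilizer $G_S$ of any rr-alternating $s$-arc $S=(\alpha_0,\ldots,\alpha_s)$ is trivial, in the spirit of the \lq\lq fixes both branches\rq\rq\ step in the proof of Theorem~\ref{thm:trivalentA}, and then to bootstrap via Lemma~\ref{lem:accessibility}(2). Regularity on rr-alternating $s$-arcs then follows from transitivity plus trivial stabilizers, and finiteness of vertex stabilizers is a short consequence.

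First I would verify the local count: because $\alpha_s$ has exactly two blue neighbours (the unique red edge at $\alpha_s$ being $\{\alpha_{s-1},\alpha_s\}$) and each of these blue neighbours has a unique red neighbour distinct from $\alpha_s$, the $s$-arc $S$ admits exactly two 2-predecessors; symmetrically it admits exactly two 2-successors. I then claim that every $g\in G_S$ fixes both 2-predecessors of $S$. Indeed, by transitivity of $G$ on rr-alternating $s$-arcs, any rr-alternating $(s+2)$-arc can be moved by $G$ so that its initial $s$-arc is $S$; such a translate is determined by which of the two 2-predecessors of $S$ it uses. If some $g\in G_S$ swapped the two 2-predecessors, composing with the translation would produce a $G$-transitive action on rr-alternating $(s+2)$-arcs, contrary to hypothesis. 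The argument for 2-successors is symmetric.

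Next I would propagate this fixation. Suppose $S'$ is any rr-alternating $s$-arc fixed pointwise by $G_S$, so $G_S\subseteq G_{S'}$. Applying the previous paragraph with $S'$ in place of $S$ shows $G_{S'}$ fixes each 2-predecessor and each 2-successor of $S'$, and hence so does $G_S$. Starting from $S$ and iterating yields that $G_S$ fixes every rr-alternating $s$-arc 2-accessible from $S$. Since $G_S$ pointwise fixes $\{\alpha_0,\ldots,\alpha_s\}$, it also fixes the reverse $\overline S$, and the same iteration gives that $G_S$ fixes every rr-alternating $s$-arc 2-accessible from $\overline S$ as well. By Lemma~\ref{lem:accessibility}(2), these together exhaust the rr-alternating $s$-arcs in $\Gamma$.

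Finally, every vertex $v$ of $\Gamma$ is the head of some rr-alternating $s$-arc (start at the unique red edge at $v$ and extend alternately, which is always possible by the trivalent structure), so $G_S$ fixes every vertex of $\Gamma$ and is therefore trivial. Combined with transitivity this gives regularity. For the final sentence, $G_v$ permutes the finite set of rr-alternating $s$-arcs with head $v$, and the kernel of this action is contained in some such $G_S=\{1\}$, so $G_v$ is finite. The step I expect to require the most care is the propagation via iterated 2-accessibility, where one must keep the hypothesis (non-transitivity on $(s+2)$-arcs) correctly applicable at each step; this is fine precisely because the hypothesis is a statement about $G$ rather than about a specific $s$-arc, so the \lq\lq swap would give transitivity\rq\rq\ argument re-runs verbatim at every $S'$ encountered.
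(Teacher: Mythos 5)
Your proposal is correct and follows essentially the same route as the paper: the ``a swap of the two extensions would force transitivity on rr-alternating $(s+2)$-arcs'' argument, propagation through $2$-accessibility via Lemma~\ref{lem:accessibility}(2), triviality of the pointwise arc-stabilizer, and then regularity and finiteness of vertex stabilizers. You are in fact slightly more explicit than the paper on two small points --- invoking $\overline S$ (whose pointwise stabilizer equals $G_S$) to cover the ``from $S$ or $\overline S$'' clause of Lemma~\ref{lem:accessibility}(2), and spelling out why finite vertex stabilizers follow --- but these are refinements of the same proof, not a different one.
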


\begin{proof}
We first show that $G$ has two orbits on the set of rr-alternating $(s+2)$-arcs. Let $S=(\alpha_0,\dots,\alpha_{s})$ be an rr-alternating $s$-arc.
Then there are exactly two rr-alternating $(s+2)$-arcs $S_1=(\alpha_0,\dots,\alpha_{s},\alpha_{s+1},\alpha_{s+2})$ and $S_1=(\alpha_0,\dots,\alpha_{s},\alpha'_{s+1},\alpha'_{s+2})$ extending $S$.
For every rr-alternating $s$-arc $(\beta_0,\dots,\beta_{s}, \beta_{s+1},\beta_{s+2})$ there exists an element $g \in G$ with $(\beta_0,\dots,\beta_{s})g=S$.  Then $g$ maps the $(s+2)$-arc $(\beta_0,\dots,\beta_{s}, \beta_{s+1},\beta_{s+2})$ to one of the 2-predecessor of $S$.  Thus $(\beta_0,\dots,\beta_s,\beta_{s+1},\beta_{s+2})$ lies in the orbit of exactly one of $S_1$ and $S_2$.

In particular, every element fixing an rr-alternating $s$-arc $S$ pointwise has to fix both of its rr-alternating 2-predecessors and 2-successors.
Inductively, we see that the pointwise stabilizer of $S$ has to fix all the vertices that are contained in any rr-alternating $s$-arc that is $2$-accessible from $S$.
By Lemma \ref{lem:accessibility} every $rr$-alternating $s$-arc is 2-accessible from $S$ and thus the pointwise stabilizer of $S$ fixes every vertex in the graph and is trivial.  Hence vertex stabilizers in $G$ are finite.
\end{proof}

The above lemmas imply the following theorem.

\begin{theorem}\label{thm:trivalentC}
Suppose $\Gamma$ is a connected trivalent graph and $G\leq \aut(\Gamma)$ acts vertex-transitively, but not edge-transitively, on $\Gamma$ and the stabilizers of vertices are infinite.  
    Then, for every $s \geq 1$, the group $G$ acts transitively on the set of all alternating $s$-arcs $(\alpha_0,\dots,\alpha_s)$ that start with an edge of a given colour.
\end{theorem}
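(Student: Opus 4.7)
\medskip

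The plan is to reduce the theorem to the single statement that $G$ acts transitively on the set of rr-alternating $m$-arcs for every odd $m\geq 1$, and then prove this reduced statement by induction on $m$ using Lemma~\ref{lem:rr transitivity}.

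For the reduction I would exploit that in Case \textbf{C} every vertex has a unique red neighbour. Given an alternating $s$-arc $(\alpha_0,\dots,\alpha_s)$ whose first or last edge is blue, one can canonically lengthen it by prepending the unique red neighbour of $\alpha_0$ whenever the first edge is blue, and appending the unique red neighbour of $\alpha_s$ whenever the last edge is blue. Doing so turns an rb-alternating $s$-arc (even $s$) into an rr-alternating $(s+1)$-arc, a br-alternating $s$-arc (even $s$) into an rr-alternating $(s+1)$-arc, and a bb-alternating $s$-arc (odd $s\geq 1$) into an rr-alternating $(s+2)$-arc. Each of these maps is a $G$-equivariant bijection, its inverse being the corresponding truncation, because $G$ preserves the colouring and hence the \emph{take the unique red neighbour} operation. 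Consequently, proving the theorem for every $s\geq 1$ and both starting colours is equivalent to showing that $G$ is transitive on rr-alternating $m$-arcs for every odd $m\geq 1$.

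For the base case $m=1$, an rr-alternating $1$-arc is just a red arc; the description of Case \textbf{C} in Section~\ref{sec:three-cases} already produces an element of $G$ swapping the endpoints of any given red edge, so $G$ is transitive on red arcs. For the inductive step, assume $G$ is transitive on rr-alternating $m$-arcs. If $G$ were not transitive on rr-alternating $(m+2)$-arcs, then Lemma~\ref{lem:rr transitivity} would force vertex stabilisers in $G$ to be finite, contradicting the hypothesis; hence $G$ is transitive on rr-alternating $(m+2)$-arcs, closing the induction.

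The heavy lifting is done entirely by Lemma~\ref{lem:rr transitivity} (and behind it Lemma~\ref{lem:accessibility}); the remaining work is bookkeeping, namely cataloguing the four types of alternating arcs by parity and starting colour, and checking that the prepend/append extension yields a genuine alternating arc. This last check is automatic, since the unique red neighbour of a vertex is distinct from either of its two blue neighbours, so the new edge created by the extension has the required opposite colour to its neighbour in the arc.
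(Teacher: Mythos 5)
Your proposal is correct and follows essentially the same route as the paper: the paper's proof likewise derives the theorem from Lemma~\ref{lem:rr transitivity} by induction (with base case transitivity on red arcs from the Case \textbf{C} analysis, and the infinite-stabilizer hypothesis ruling out failure at any stage), and then transfers transitivity to the bb-, rb- and br-alternating arcs exactly via the unique-red-neighbour extension bijections you describe. You merely spell out the bookkeeping that the paper leaves implicit.
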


\begin{proof}
By Lemma \ref{lem:rr transitivity} the group $G$ acts transitively on the set of rr-alternating and the set of bb-alternating $s$-arcs for all odd $s \geq 1$. Then it also acts transitively on the set of all rb-alternating and the set of br-alternating arcs $s$-arcs for all even $s \geq 2$.
\end{proof}

\begin{corollary}\label{cor:no-alternating-cycles}
Suppose $\Gamma$ is a connected trivalent graph and $G\leq \aut(\Gamma)$ acts vertex-transitively, but not edge-transitively, on $\Gamma$ and the stabilizers of vertices are infinite.  Let $(\alpha_0, \ldots, \alpha_s)$ be an alternating $s$-arc.  Then  $\alpha_0\neq\alpha_s$ and  $\alpha_0$ and $\alpha_s$ are not adjacent.
\end{corollary}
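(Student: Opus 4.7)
The plan is to argue by contradiction. Suppose some alternating $s$-arc $(\alpha_0,\dots,\alpha_s)$ violates the conclusion, i.e.\ $\alpha_0=\alpha_s$ or $\alpha_0$ is adjacent to $\alpha_s$. The crucial ingredient is Theorem~\ref{thm:trivalentC}: for each starting colour, $G$ acts transitively on the set of alternating $s$-arcs beginning with that colour, so the properties ``head equals tail'' and ``head adjacent to tail by a fixed colour'' are $G$-invariant. Consequently, if one alternating $s$-arc of a given type violates the conclusion, every alternating $s$-arc of that type does. I will first reduce the adjacency case to the closure case, and then rule out closed alternating $s$-arcs for every $s\geq 2$ by a ``2-successor'' argument.

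For the reduction, let $c$ be the colour of $\{\alpha_0,\alpha_s\}$. Appending $\alpha_{s+1}=\alpha_0$ produces a closed alternating $(s+1)$-arc whenever $c$ differs from the colour of the last edge $(\alpha_{s-1},\alpha_s)$, and symmetrically prepending $\alpha_{-1}=\alpha_s$ works whenever $c$ differs from the colour of the first edge. For even $s$ the first and last edge colours are opposite, so at least one move always succeeds. For odd $s$ the only problematic subcase is that $c$ equals both the first and last colour: in the rr-alternating case, uniqueness of the red edge at $\alpha_0$ then forces $\alpha_s=\alpha_1$ and hence yields a closed alternating $(s-1)$-subarc, while in the bb-alternating case either $\alpha_s=\alpha_1$ (again giving a shorter closure) or $\alpha_s$ is the other blue neighbour of $\alpha_0$, a $G$-invariant configuration to which the 2-successor argument below applies directly, with ``closed'' replaced by ``tail is the other blue neighbour of head''.

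The central claim is that no closed alternating $s$-arc exists for $s\geq 2$. For $s=2$ this is immediate from the definition of a $2$-arc. For $s\geq 3$, suppose $(\alpha_0,\dots,\alpha_s=\alpha_0)$ is closed alternating and extend it to an alternating $(s+2)$-arc $(\alpha_0,\dots,\alpha_{s+2})$; the colours of the two new edges are forced by alternation. Exactly one of the two new steps goes from a vertex along its unique red edge (hence the next vertex is forced) and the other along one of two blue edges (giving two candidates). In the bb- and rb-alternating cases this freedom appears at $\alpha_{s+2}$ directly, yielding two distinct candidates; in the rr- and br-alternating cases it appears at $\alpha_{s+1}$, and the two resulting candidates for $\alpha_{s+2}$ are still distinct, for otherwise the uniqueness of the red edge at their common value would force the two blue choices to agree. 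The sub-arc $(\alpha_2,\dots,\alpha_{s+2})$ is an alternating $s$-arc starting with the same colour as the original (the colour pattern simply shifts by two), hence lies in the same $G$-orbit by Theorem~\ref{thm:trivalentC} and must therefore also be closed, forcing $\alpha_{s+2}=\alpha_2$. But then both distinct candidates for $\alpha_{s+2}$ equal $\alpha_2$, which is impossible.

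The main obstacle I anticipate is the bookkeeping across the four alternation types, since the position carrying the two candidate extensions and the vertex at which uniqueness of the red edge is invoked shift between the cases. The arc conditions $\alpha_{i-1}\neq\alpha_{i+1}$ are straightforward to verify throughout: at each step the new vertex is reached by an edge whose colour differs from the one used to arrive, and at every vertex the red neighbour and the two blue neighbours are automatically distinct.
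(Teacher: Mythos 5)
Your proposal is correct: the two-choices-for-the-extension argument, the distinctness of the two candidates for $\alpha_{s+2}$ via uniqueness of the red edge, the orbit-invariance of ``head equals tail'' (and of ``tail is the other blue neighbour of the head''), and the reduction of the adjacency case to the closed case all go through, with the arc conditions verified exactly as you indicate. However, your route differs from the paper's. The paper also leans on Theorem~\ref{thm:trivalentC}, but applies it in a one-shot way per case: for a closed alternating $s$-arc it (after rotating so the last edge is blue) maps $(\alpha_0,\dots,\alpha_{s-1},\alpha_s)$ to the sibling arc $(\alpha_0,\dots,\alpha_{s-1},\beta)$ ending at the other blue neighbour of $\alpha_{s-1}$; since such a $g$ fixes $\alpha_0=\alpha_s$ but must send $\alpha_s$ to $\beta\neq\alpha_s$, the contradiction is immediate. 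For the adjacency case the paper first shows by short colour bookkeeping (using the closed case) that $\{\alpha_0,\alpha_1\}$, $\{\alpha_{s-1},\alpha_s\}$ and $\{\alpha_0,\alpha_s\}$ must all be blue, and then the same sibling-arc element transports the chord $\{\alpha_0,\alpha_s\}$ to a third blue edge at $\alpha_0$, contradicting trivalence. So the paper exploits that the image arc can be chosen to share the length-$(s-1)$ prefix, giving very local contradictions, whereas you exploit invariance of the head--tail relation over the whole orbit together with a shift-by-two extension, in the spirit of the $2$-successor bookkeeping of Lemmas~\ref{lem:accessibility} and \ref{lem:rr transitivity}. The paper's proof is shorter and needs essentially no case distinction beyond the colour bookkeeping; yours is more mechanical and avoids the slightly delicate ``renumber the cycle'' step, at the price of the even/odd and rr/bb subcases in your reduction. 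One minor caveat you share with the paper: the adjacency assertion only makes sense for $s\geq 2$ (for $s=1$ the head and tail are adjacent by definition), and both proofs implicitly assume this.
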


\begin{proof}
Let us first show that is impossible that $\alpha_0=\alpha_s$.  
Clearly $s\geq 3$.  By renumbering the vertices in the cycle formed by the vertices $\alpha_0, \ldots, \alpha_s$ we may assume that the edge $\{\alpha_{s-1}, \alpha_s\}$ is blue.  Then there is a vertex $\beta$ such that $\{\alpha_{s-1}, \beta\}$ is also a blue edge and $\beta\neq \alpha_s$.  By the last theorem there exists an element $g\in G$ that takes the alternating $s$-arc $(\alpha_0, \ldots, \alpha_{s-1}, \alpha_s)$ to the alternating $s$-arc $(\alpha_0, \ldots, \alpha_{s-1}, \beta)$, but that is clearly impossible.

Suppose that $(\alpha_0, \ldots, \alpha_s)$ is an alternating $s$-arc such that $\alpha_0$ and $\alpha_s$ are adjacent.   By the above the vertices $\alpha_0, \ldots, \alpha_s$ are all distinct.   If the edge $\{\alpha_0, \alpha_s\}$ is red then $(\alpha_s, \alpha_0, \ldots, \alpha_s)$ would be an alternating $(s+1)$-arc contradicting what is shown above.  Thus the edge $\{\alpha_0, \alpha_s\}$ must be blue.  If the edge $\{\alpha_0, \alpha_1\}$ is red then $(\alpha_s, \alpha_0, \ldots \alpha_s)$ would be an alternating $(s+1)$-arc and that is impossible, and if the edge $\{\alpha_{s-1}, \alpha_s\}$ is red then $(\alpha_0, \ldots, \alpha_s, \alpha_0)$ would be an alternating $(s+1)$-arc.  Hence we see that both the edges $\{\alpha_0, \alpha_1\}$ and $\{\alpha_{s-1}, \alpha_s\}$ must be blue.  Let $\beta$ be a vertex, distinct from $\alpha_s$, such that $\{\alpha_{s-1}, \beta\}$ is a blue edge.  Note that, by the above $\alpha_s\neq \alpha_1$ and $\beta\neq \alpha_1$.  Let $g$ be an element in $G$ taking the alternating $s$-arc $(\alpha_0, \ldots, \alpha_{s-1}, \alpha_s)$ to the alternating $s$-arc $(\alpha_0, \ldots, \alpha_{s-1}, \beta)$.  Then $\{\alpha_0, \alpha_s\}g=\{\alpha_0, \beta\}$ is a blue edge and $\alpha_0$ is the end-vertex of 3 distinct blue edges $\{\alpha_0, \alpha_1\}, \{\alpha_0, \alpha_2\}$ and $\{\alpha_0, \beta\}$, which is impossible.   We have reached a contradiction and our proof is complete.
\end{proof}

\begin{corollary}
Suppose $\Gamma$ is a connected trivalent graph and $G\leq \aut(\Gamma)$ acts vertex-transitively, but not edge-transitively, on $\Gamma$ and the stabilizers of vertices are infinite.  Then $\Gamma$ contains an infinite alternating line and every alternating $s$-arc is a part an infinite alternating line.  
\end{corollary}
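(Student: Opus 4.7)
The plan is to extend any alternating $s$-arc one vertex at a time in both directions, using the trivalent structure to guarantee that an extension always exists, and then to invoke Corollary~\ref{cor:no-alternating-cycles} to ensure that the resulting two-way infinite walk has no repeated vertices and is therefore a line.

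First I would record a useful consequence of Corollary~\ref{cor:no-alternating-cycles}: any sub-sequence $(\alpha_i,\dots,\alpha_j)$ of an alternating arc is again an alternating arc, so its endpoints are distinct by the corollary. Hence every alternating arc has pairwise distinct vertices. Next I would prove the one-step extension: given an alternating $s$-arc $(\alpha_0,\dots,\alpha_s)$ with $s\ge1$, we need $\alpha_{s+1}$ with $\{\alpha_s,\alpha_{s+1}\}$ of opposite colour to $\{\alpha_{s-1},\alpha_s\}$ and with $\alpha_{s+1}\ne\alpha_{s-1}$. If $\{\alpha_{s-1},\alpha_s\}$ is blue, the unique red neighbour of $\alpha_s$ is automatically different from the blue neighbour $\alpha_{s-1}$ and serves as $\alpha_{s+1}$; if $\{\alpha_{s-1},\alpha_s\}$ is red, either of the two blue neighbours of $\alpha_s$ is different from the red neighbour $\alpha_{s-1}$ and serves as $\alpha_{s+1}$. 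Extension at the head is symmetric.

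Iterating these one-step extensions in both directions yields a two-way infinite sequence $(\ldots,\alpha_{-1},\alpha_0,\alpha_1,\ldots)$ every finite sub-arc of which is alternating. By the first step all vertices in the sequence are pairwise distinct, so the associated subgraph is an infinite alternating line containing the original $s$-arc; the stand-alone existence statement is obtained by starting the same procedure from any single edge. I do not anticipate a real obstacle here: the argument is pure local degree-counting, with Corollary~\ref{cor:no-alternating-cycles} doing all the global work of forbidding repetition. The subtlest point is simply noticing that the alternating property is inherited by arbitrary sub-arcs, which is what turns the corollary's endpoint statement into the pointwise-distinctness needed to upgrade the infinite walk to a line.
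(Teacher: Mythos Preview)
Your proposal is correct and follows essentially the same approach as the paper: extend the alternating $s$-arc indefinitely in both directions using the local red/blue degree count, then apply Corollary~\ref{cor:no-alternating-cycles} to conclude that all vertices in the resulting two-way infinite alternating arc are distinct. The paper's proof is terser (it merely asserts the extension step is ``clear''), but your explicit degree-counting and the observation that sub-arcs inherit the alternating property are exactly what underlies that claim.
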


\begin{proof}
It is clear that every alternating $s$-arc can be extended to a 2-way infinite alternating arc.  By Corollary~\ref{cor:no-alternating-cycles} all the vertices in this infinite alternating arc must be distinct and thus we have an infinite alternating line.
\end{proof}

The argument used to prove Lemma~\ref{lem:infinte-arcs} can be adapted to show the following.

\begin{corollary}\label{cor:transitive-on-lines}
Suppose $\Gamma$ is a connected, trivalent graph and $G\leq \aut(\Gamma)$ acts vertex-transitively, but not edge-transitively, on $\Gamma$ and the stabilizers of vertices are infinite.  If $\ldots, \alpha_{-1}, \alpha_0, \alpha_1, \alpha_2, \ldots$ and $\ldots, \beta_{-1}, \beta_0, \beta_1, \beta_2, \ldots$ are two infinite alternating lines such that the edges $\{\alpha_0, \alpha_1\}$ and $\{\beta_0, \beta_1\}$ have the same colour, then there exists in $g \in G$ such that $\alpha_i g=\beta_i$ for all $i$.  
\end{corollary}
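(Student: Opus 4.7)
The plan is to adapt, almost verbatim, the compactness argument from the proof of Lemma~\ref{lem:infinte-arcs}, replacing arc-transitivity by the transitivity on alternating arcs provided by Theorem~\ref{thm:trivalentC}. First I would set up the finite approximations. For each $n \geq 0$ look at the sub-arc $(\alpha_{-n},\alpha_{-n+1},\dots,\alpha_n)$: because the colouring of the line alternates along its edges, the colour of the initial edge $\{\alpha_{-n},\alpha_{-n+1}\}$ depends only on the parity of $n$ and on the colour of $\{\alpha_0,\alpha_1\}$. The same is true along the $\beta$-line, and by hypothesis $\{\alpha_0,\alpha_1\}$ and $\{\beta_0,\beta_1\}$ have the same colour, so the two alternating $2n$-arcs $(\alpha_{-n},\dots,\alpha_n)$ and $(\beta_{-n},\dots,\beta_n)$ begin with an edge of the same colour. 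Theorem~\ref{thm:trivalentC} thus supplies an element $g_n \in G$ with $(\alpha_{-n},\dots,\alpha_n)g_n = (\beta_{-n},\dots,\beta_n)$.

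Next I would run the König-style diagonal extraction carried out in Lemma~\ref{lem:infinte-arcs}. Write $A_k$ for the set of vertices at distance at most $k$ from $\alpha_0$; since $\Gamma$ is trivalent it is locally finite and each $A_k$ is finite. All $g_n$ with $n \geq k$ send $\alpha_0$ to $\beta_0$, and there are only finitely many possibilities for a map $A_k \to \V\Gamma$ that is the restriction of a graph automorphism, so one can recursively pass to infinite sub-subsequences on which the restrictions to $A_1,A_2,\dots$ are each constant. Diagonalising produces a subsequence $(g_{n_j})$ such that for every vertex $\alpha$ the value $\alpha g_{n_j}$ is eventually constant; define $\alpha g$ to be that eventual value. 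Then $(g_{n_j})$ converges to $g$ in the sense of Section~\ref{sec:Convergence}, the map $g$ is a bijection of $\V\Gamma$ preserving adjacency (each $g_{n_j}$ does), and by construction $\alpha_i g = \beta_i$ for every $i$, because every $g_{n_j}$ with $n_j \geq |i|$ already satisfies this.

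The only remaining point — and the only non-routine one — is to conclude $g \in G$ rather than merely $g \in \aut(\Gamma)$. This is exactly where I would invoke the hypothesis, implicit in the phrase \emph{``adapted from Lemma~\ref{lem:infinte-arcs}''}, that $G$ is a closed subgroup of $\aut(\Gamma)$; the statement of the corollary should carry the same closedness assumption as Lemma~\ref{lem:infinte-arcs}. Granted closedness, the definition of a closed permutation group forces the limit $g$ to belong to $G$, and the proof is complete. The main obstacle is thus not a genuine technical hurdle but the bookkeeping of the alternating colour pattern needed to legitimately apply Theorem~\ref{thm:trivalentC} at every stage; once that parity check is in place, the locally finite compactness argument of Lemma~\ref{lem:infinte-arcs} transfers with no change.
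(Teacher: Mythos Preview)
Your proposal is correct and follows exactly the approach the paper intends: the paper does not give a separate proof but simply says ``The argument used to prove Lemma~\ref{lem:infinte-arcs} can be adapted to show the following,'' and your write-up is precisely that adaptation, with Theorem~\ref{thm:trivalentC} supplying the finite approximations in place of high-arc-transitivity. Your observation that the closedness hypothesis on $G$ is needed but not stated in the corollary is apt; the paper is silent on this point, and you are right to flag it.
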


\section{2-ended trivalent graphs}

In this section we classify connected, 2-ended, trivalent graphs such that the automorphism group is vertex-transitive and the stabilizers of vertices are infinite.  

The argument used in the proof of the following lemma is somewhat reminiscent of arguments found in \cite{Moller2002} and the notation is chosen to reflect this similarity.  This lemma will be used again in Section~\ref{sec:Scale}.  

\begin{lemma}\label{lem:trivalent-two-ends}
Suppose $\Gamma$ is a connected, vertex-transitive, trivalent graph and $G\leq \aut(\Gamma)$ such that the conditions in Case {\bf C} hold.     Let $\ldots, \alpha_{-1}, \beta_{-1}, \alpha_0, \beta_0, \alpha_1, \beta_1, \alpha_2,\ldots$ be an alternating line in $\Gamma$ such that the edges of the type $\{\alpha_i, \beta_i\}$ are red and the edges of the type $\{\beta_i, \alpha_{i+1}\}$ are blue.  If there is a constant $C$ such that $|\alpha_i G_{\alpha_0}|\leq C$ for all $i\geq 1$, then $\Gamma$ has exactly two ends.
\end{lemma}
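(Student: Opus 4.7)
The plan is to show that $\Gamma$ has exactly two ends by exhibiting its vertex set as a union of finitely many finite $G_{\alpha_0}$-orbits strung along the alternating line $L$. First, I would apply Corollary~\ref{cor:transitive-on-lines} to $L$ and to the line obtained by shifting the indexing by one full period---whose initial red edge has the same color as that of $L$---to produce an element $g\in G$ with $\alpha_i g=\alpha_{i+1}$ and $\beta_i g=\beta_{i+1}$ for all $i\in\mathbb{Z}$. This $g$ is a hyperbolic automorphism whose axis is $L$.

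Next, I would extend the bound to all $i\neq 0$. By Theorem~\ref{thm:trivalentC}, $G$ is transitive on rb-alternating $(2i)$-arcs, so for each $i\geq 1$ there exists $h_i\in G$ mapping $(\alpha_0,\beta_0,\ldots,\alpha_i)$ to its reverse $(\alpha_i,\beta_{i-1},\ldots,\alpha_0)$, which swaps $\alpha_0$ and $\alpha_i$. Conjugation by $h_i$ gives $|\alpha_0 G_{\alpha_i}|=|\alpha_i G_{\alpha_0}|\leq C$, and conjugating by $g^{-i}$ yields $|\alpha_{-i}G_{\alpha_0}|\leq C$. Since $\beta_i$ is a neighbor of $\alpha_i$ and $\Gamma$ is trivalent, $|\beta_i G_{\alpha_0}|\leq 3C$ as well.

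The central step is to show that every vertex of $\Gamma$ lies in one of the finite orbits $A_i:=\alpha_iG_{\alpha_0}$ or $B_i:=\beta_iG_{\alpha_0}$, which I would prove inductively by distance from $\alpha_0$. The three neighbors of $\alpha_0$ are $\beta_0\in B_0$ together with the two blue neighbors $\beta_{-1}$ and a third vertex $\gamma_0$; the local $C_2$-action on $N(\alpha_0)$ swaps these, so $\gamma_0\in B_{-1}$. For the inductive step, at any vertex $v\in A_i\cup B_i$ the local $C_2$-action at $v$ is realized by an element $\sigma$, which---after a careful choice using Theorem~\ref{thm:trivalentC} and the extended bound---can be arranged to lie in $G_{\alpha_0}$, showing that all neighbors of $v$ also lie in some $A_j\cup B_j$. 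Once $V\Gamma=\bigcup_{i\in\mathbb{Z}}(A_i\cup B_i)$ with each piece of uniformly bounded size, the map $\pi\colon V\Gamma\to\mathbb{Z}$ sending a vertex in $A_i\cup B_i$ to $i$ has finite fibers, and the finite set $\pi^{-1}([-N,N])$ for $N$ large separates $\Gamma$ into the two infinite components $\bigcup_{i>N}(A_i\cup B_i)$ and $\bigcup_{i<-N}(A_i\cup B_i)$, giving exactly two ends.

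The main obstacle is the coverage claim in the preceding paragraph: Case~\textbf{C} in general admits tree-like examples, such as Example~\ref{ex:the-cases}(4), with unbounded off-line orbits, where coverage by the $A_i\cup B_i$'s fails and the graph has more than two ends. The hypothesis $|\alpha_iG_{\alpha_0}|\leq C$ must therefore do substantial work, and the delicate point is that when $\alpha_0$ is a blue neighbor or non-neighbor of a vertex $v$, the natural swapping element at $v$ does not automatically fix $\alpha_0$; invoking alternating-arc transitivity to replace it by an element that does, or equivalently tracking bounded $\langle g\rangle$-orbits via $d(vg^n,\alpha_n)=d(v,\alpha_0)$, is where the bound enters.
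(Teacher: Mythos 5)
Your second step fails as stated. The element $h_i$ you invoke would map the rb-alternating arc $(\alpha_0,\beta_0,\dots,\alpha_i)$ to its reverse, which is a \emph{br}-alternating arc: it would have to send the red arc $(\alpha_0,\beta_0)$ to the blue arc $(\alpha_i,\beta_{i-1})$, and in Case \textbf{C} the red and blue edges are distinct $G$-orbits, so no such element exists; Theorem~\ref{thm:trivalentC} only gives transitivity among alternating arcs starting with the \emph{same} colour. Without such a flip the identity $|\alpha_0G_{\alpha_i}|=|\alpha_iG_{\alpha_0}|$ is unjustified -- it is exactly the sort of unimodularity statement one cannot assume, and controlling the orbits in the backward direction is a genuine issue rather than a formality. (This particular step is repairable: apply Corollary~\ref{cor:transitive-on-lines} to the line and to its reversal, re-indexed so that the distinguished edges coincide; this gives $h\in G$ with $\alpha_ih=\beta_{-i}$ and $\beta_ih=\alpha_{-i}$, and since a vertex stabilizer fixes its red neighbour, i.e.\ $G_{\alpha_0}\leq G_{\beta_0}$, one deduces $|\alpha_{-i}G_{\alpha_0}|\leq C$.)

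The decisive gap, however, is the coverage claim $\V\Gamma=\bigcup_i(A_i\cup B_i)$, which you assert but do not prove: saying the local swap at $v$ ``can be arranged to lie in $G_{\alpha_0}$'' is precisely what needs an argument, since the transitivity results produce elements moving prescribed arcs, not elements fixing $\alpha_0$; you name this as the main obstacle and leave it open. This is where the paper's proof does its real work. It introduces the pointwise stabilizers $U_{-\infty,i}$ of the rays $\ldots,\alpha_{i-1},\beta_{i-1},\alpha_i,\beta_i$, their union $U_{++}$, which is normalized by $g$ and whose orbits $\alpha_jU_{++}$ and $\beta_jU_{++}$ have size at most $C$ for \emph{all} $j\in\ZZ$ (so the backward direction is handled with no flip at all), then sets $G_{++}=\langle U_{++},g\rangle$ and considers the subgraph $\Gamma_{++}$ spanned by the $G_{++}$-orbits of $\alpha_0$ and $\beta_0$. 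Coverage is obtained by a pigeonhole argument: choose $n$ with $2^n>C$; the group $(U_{++})_{\alpha_0}$ is transitive on the $2^n$ alternating $2n$-arcs issuing from $\alpha_0$ along the red edge, yet $|\alpha_n(U_{++})_{\alpha_0}|\leq C<2^n$, so two distinct such arcs end at the same vertex, forcing the $\alpha$-type vertices of $\Gamma_{++}$ to have degree $3$ and hence $\Gamma_{++}=\Gamma$ by connectedness. Only then does one know that $\langle g\rangle$ has at most $2C$ vertex-orbits, and two-endedness follows from Jung--Watkins \cite[Theorem~5.12]{JungWatkins1984} -- which also replaces your final separation step, where the claim that removing $\pi^{-1}([-N,N])$ leaves exactly two infinite components would itself require knowing that edges of $\Gamma$ only join levels that are boundedly far apart.
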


\begin{proof}   By Corollary~\ref{cor:transitive-on-lines} there exists an element $g\in G$ such that $\alpha_ig=\alpha_{i+1}$ and $\beta_i g=\beta_{i+1}$ for all $i$.
Set $U=G_{\alpha_0}$.   Define $U_{-\infty, i}$ as the subgroup of $G$ fixing pointwise the ray $\ldots, \alpha_{i-1}, \beta_{i-1}, \alpha_{i}, \beta_{i}$. These subgroups are all conjugate via powers of $g$.  Now define $U_{++}$ as the subgroup $\bigcup_{i\in \ZZ} U_{-\infty, i}$ and $G_{++}=\langle U_{++}, g\rangle$. Clearly 
$$G_{++}=\{h\in G\mid \mbox{there exist }m, n\in \ZZ\mbox{ such that }(\ldots, \alpha_m, \beta_m)h=(\ldots, \alpha_n, \beta_n)\}.$$
Note that $g^{-1} U_{++} g= U_{++}$.   Let $\Gamma_{++}$ denote the subgraph that has vertex set $\alpha_0 G_{++}\cup \beta_0G_{++}$ and edge set $\{\alpha_0, \beta_0\}G_{++}\cup \{\beta_0, \alpha_1\}G_{++}$.  Our aim is to show that $\Gamma_{++}$ is equal to $\Gamma$.  The graph $\Gamma_{++}$ is connected.  The group $G_{++}$ has at most two orbits on the vertex set of $\Gamma_{++}$  and also at most two orbits on the edge set. It follows from the transitivity on alternating lines (see Corollary~\ref{cor:transitive-on-lines} above)  that all the vertices in the orbit $\beta_i G_{++}$ have degree $3$ in the graph $\Gamma_{++}$.   

Suppose $n$ is a number such that $2^n>C$.  There are $2^n$ alternating $2n$-arcs having $\alpha_0$ as their initial vertex and starting with the red edge $\{\alpha_0, \beta_0\}$.  From Corollary~\ref{cor:transitive-on-lines} we see that the group $(U_{++})_{\alpha_0}$ acts transitively on the set of these arcs.  But the orbit $\alpha_n (U_{++})_{\alpha_0}$ has fewer than $2^n$ elements and thus there is some alternating $2n$-arc in $\Gamma_{++}$ of the form $(\alpha_0, \beta_0', \ldots, \beta'_{n-1}, \alpha_n)$ that is different from the $2n$-arc $(\alpha_0, \beta_0, \ldots, \beta_{n-1}, \alpha_n)$.  Note that it is impossible that $\alpha_i=\alpha'_i$ for all $i$.  Let $i$ be the biggest number such that $\alpha_i\neq \alpha'_i$.  Then $\beta_i\neq\beta'_i$ and $\beta'_{i+1}=\beta_{i+1}$.  Hence the vertices $\beta_i, \beta'_i$ and $\beta_{i+1}$ are all distinct and all of them are neighbours of $\alpha_{i+1}$ (recall that the edges $\{\beta_i, \alpha_{i+1}\}$ and $\{\beta'_i, \alpha_{i+1}\}$ are both blue but the edge $\{\alpha_{i+1}, \beta_{i+1}\}$ is red).  Thus the vertex $\alpha_{i+1}$ also has degree 3 in $\Gamma_{++}$.  Hence the graph $\Gamma_{++}$ is regular with degree 3.   Since the graph $\Gamma$ is trivalent and connected, we see that $\Gamma_{++}=\Gamma$.  

The orbits $\alpha_i U_{++}$ are all finite and each orbit has size at most $C$.  The same holds true for the orbits $\beta_i U_{++}$.  We also see that $(\alpha_i U_{++})g=\alpha_{i+1} U_{++}$ and similarly that $(\beta_i U_{++})g=\beta_{i+1} U_{++}$.  Hence $g$ has at most $2C$ orbits on $\Gamma$.   A result of Jung and Watkins \cite[Theorem~5.12]{JungWatkins1984} says that a connected vertex transitive graph that has an automorphism with only finitely many orbits has just two ends.
\end{proof}

\begin{remark}
From the argument above we see that it is enough to assume that there exists some positive integer $n$ such that $|\alpha_n G_{\alpha_0}|<2^n$ to get the conclusion that $\Gamma$ has exactly two ends.
\end{remark}

The next result is a classification of connected, vertex-transitive, trivalent graphs with two ends such that the stabilizers in the automorphism group are infinite.  Some preliminary work is needed before we can state the theorem.

 In \cite[Corollary 16]{MollerPotocnikSeifter2019} highly-arc-transitive digraphs with two ends and prime in- and out-degree are classified:  Let $\Delta_p$ be the digraph with vertex set $\ZZ\times \{1, \ldots, p\}$ and arc set the set of all pairs $((i, j), (i+1, j'))$ with $i\in \ZZ$ and $j, j'\in \{1, \ldots, p\}$.  Any highly-arc-transitive digraph with two ends and  in- and out-degree equal to $p$ is isomorphic to $\Delta_p$ or one of its $s$-arc-digraphs $B_s(\Delta_p)$.   

These digraphs all have the property that they are isomorphic to their reverse digraph.  Thus one can apply the construction described in Part 6 of Example~\ref{ex:the-cases} to $B_s(\Delta_2)$ and get a trivalent graph $\Theta_{s}:= (B_s(\Delta_2))_*$ such that its automorphism group satisfies the conditions in Case {\bf C}.  Let $\Theta_0$ denote the digraph we get from $\Delta_2$.  If $\Delta$ is isomorphic to $\Delta_2$ then $\Gamma$ is isomorphic to $\Theta_0$, but if $\Delta$ is isomorphic to $B_s(\Delta_2)$ for some $s\geq 1$ then $\Gamma$ is isomorphic to $\Theta_s$. 

\begin{theorem}\label{thm:2-ends}
Suppose $\Gamma$ is a connected, vertex-transitive, trivalent graph with two ends.  Suppose the stabilizers of vertices in $\aut(\Gamma)$ are infinite.  Then $\Gamma$ is isomorphic to $\Theta_s$ for some $s\geq 0$.  
\end{theorem}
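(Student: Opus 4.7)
My plan is to show $\Gamma\cong\Theta_s$ by reconstructing a highly-arc-transitive digraph $\Delta$ with in- and out-degree $2$ and two ends such that $\Gamma\cong\Delta_*$; the classification recalled just before the statement (\cite[Corollary~16]{MollerPotocnikSeifter2019}) will then force $\Delta\cong\Delta_2$ or $\Delta\cong B_s(\Delta_2)$, giving $\Gamma\cong\Theta_s$.

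First I would dispose of Cases {\bf A} and {\bf B}: Corollary~\ref{cor:tree} forces $\Gamma$ to be the $3$-regular tree in each of them, and the $3$-regular tree has uncountably many ends, contradicting our hypothesis. Hence $G:=\aut(\Gamma)$ is in Case {\bf C}. By the corollary on existence of alternating lines following Corollary~\ref{cor:no-alternating-cycles}, there is an infinite alternating line $L=\ldots,\alpha_{-1},\beta_{-1},\alpha_0,\beta_0,\alpha_1,\beta_1,\ldots$ with red edges $\{\alpha_i,\beta_i\}$ and blue edges $\{\beta_i,\alpha_{i+1}\}$, and Corollary~\ref{cor:transitive-on-lines} supplies $g\in G$ with $\alpha_ig=\alpha_{i+1}$ and $\beta_ig=\beta_{i+1}$.

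Next I would produce a bipartition of $\V\Gamma$ refining the colour structure. Locally finite, vertex-transitive two-ended graphs have bounded sphere sizes, so each orbit $\alpha_iG_{\alpha_0}\subseteq S_{d(\alpha_0,\alpha_i)}(\alpha_0)$ is bounded by some constant $C$ independent of $i$. This lets me invoke Lemma~\ref{lem:trivalent-two-ends}; its proof produces the subgroup $G_{++}=\langle U_{++},g\rangle$ together with the equality $\Gamma_{++}=\Gamma$. Set $X:=\alpha_0G_{++}$ and $Y:=\beta_0G_{++}$. By definition, elements of $G_{++}$ map rays of $L$ of the form $(\ldots,\alpha_m,\beta_m)$ to similar rays, so they preserve the $\alpha/\beta$ distinction along $L$; in particular $X\cap Y=\emptyset$, and $X\cup Y=\V\Gamma_{++}=\V\Gamma$, so $(X,Y)$ is a bipartition. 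Moreover, in the proof of Lemma~\ref{lem:trivalent-two-ends} every red edge lies in the orbit $\{\alpha_0,\beta_0\}G_{++}$ and every blue edge in $\{\beta_0,\alpha_1\}G_{++}$; both of these orbits consist of pairs with one endpoint in $X$ and one in $Y$.

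With the bipartition in hand I build $\Delta$: its vertex set is the set of red edges of $\Gamma$, and for each blue edge $\{u,v\}$ with $u\in Y$ and $v\in X$ I put an arc from the red edge containing $u$ to the red edge containing $v$. Each vertex of $X$ has exactly two blue neighbours, all in $Y$, and similarly for $Y$, so $\Delta$ has in- and out-degree $2$; item~(6) of Example~\ref{ex:the-cases} then identifies $\Delta_*\cong\Gamma$. The subgroup $G_{++}$ preserves $(X,Y)$ and so acts on $\Delta$; it is vertex-transitive there (being transitive on red edges of $\Gamma$), and translating alternating $(2s+1)$-arcs in $\Gamma$ starting with red into $s$-arcs of $\Delta$, together with Theorem~\ref{thm:trivalentC} and the inclusion $G_{\alpha_0}\subseteq G_{++}$, shows that $G_{++}$ is highly-arc-transitive on $\Delta$. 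Contracting the red perfect matching does not change the end structure, so $\Delta$ has exactly two ends. Applying \cite[Corollary~16]{MollerPotocnikSeifter2019} then yields $\Delta\cong\Delta_2$ or $\Delta\cong B_s(\Delta_2)$ for some $s\geq 1$, and hence $\Gamma\cong\Theta_s$ for some $s\geq 0$. I expect the main obstacle to be the bipartition step, where one needs to carefully verify, from the proof of Lemma~\ref{lem:trivalent-two-ends}, that $G_{++}$ has exactly two vertex orbits and that both edge orbits actually cross these orbits.
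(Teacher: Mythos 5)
Your overall route is the paper's: reduce to Case \textbf{C}, take the alternating line, the translation $g$ and the subgroup $G_{++}$ from the proof of Lemma~\ref{lem:trivalent-two-ends}, establish a coherent orientation, contract the red edges to obtain a connected, two-ended, highly-arc-transitive digraph with in- and out-degree $2$, and quote \cite[Corollary~16]{MollerPotocnikSeifter2019}. Your remark that two-ended, locally finite, vertex-transitive graphs have bounded spheres is a genuinely useful addition, since it is what licenses running the proof of Lemma~\ref{lem:trivalent-two-ends} under the two-ends hypothesis (the paper leaves this implicit). However, there are two gaps, both at the delicate points. First, your argument that $X\cap Y=\emptyset$ is a non sequitur. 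An element $h\in G_{++}$ is only required to map \emph{some} ray $(\ldots,\alpha_m,\beta_m)$ onto some ray $(\ldots,\alpha_n,\beta_n)$, and the witnessing index $m$ may be negative; then $\alpha_0,\beta_0$ do not lie on the witnessing ray and nothing in the definition constrains $\alpha_0h$, so ``preserving the $\alpha/\beta$ distinction along $L$'' does not exclude $\alpha_0h=\beta_0$. Since colours are preserved and red neighbours are unique, $\alpha_0h=\beta_0$ forces $h$ to transpose $\alpha_0$ and $\beta_0$; thus $X\cap Y=\emptyset$ is precisely the statement that no element of $G_{++}$ reverses a red edge, which is the crux of the whole proof (it is what makes the orientation of your $\Delta$ well defined). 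The paper handles exactly this point by a different mechanism: every element of $G_{++}$ fixes both ends of $\Gamma$ (it maps a ray to one end onto a ray to the same end, and there are only two ends), whereas an element of $G_{++}$ transposing a red pair leads to an element of $G_{++}$ reversing an alternating line and hence swapping the two ends. Some argument of this kind, using the ends, has to be supplied; the ray-preservation remark alone does not do it.

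Second, the inclusion $G_{\alpha_0}\subseteq G_{++}$, which you use to get high arc-transitivity of $\Delta$, is false. Already in $\Theta_0$ there is an automorphism fixing $\alpha_0$ (indeed fixing everything on one side of it) while moving $\beta_{-1}$ off the line $L$; such an element maps no ray $(\ldots,\alpha_m,\beta_m)$ onto a ray of that form and so does not belong to $G_{++}$. The transitivity you want should instead be extracted the way the paper does inside the proof of Lemma~\ref{lem:trivalent-two-ends}: after using transitivity of $G_{++}$ on red arcs to bring both $s$-arcs of $\Delta$ to start at the arc $(\alpha_0,\beta_0)$, extend the corresponding alternating arcs of $\Gamma$ to two-way infinite alternating lines sharing the left ray of $L$, and apply Corollary~\ref{cor:transitive-on-lines}; the resulting element fixes that ray pointwise, hence lies in $U_{++}\subseteq G_{++}$, and this also sidesteps any worry about whether general elements of $G$ preserve your bipartition. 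With these two repairs your argument becomes, in substance, the paper's proof.
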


\begin{proof}
Continue with the setup in the proof of the previous lemma with $L$ denoting the alternating line with vertex set $\{\ldots, \alpha_{0}, \beta_{0}, \alpha_1, \beta_1, \ldots\}$ .  We aim to construct on the basis of $\Gamma$ a connected highly-arc-transitive digraph such that all vertices have in-degree 2 and out-degree 2.  
Consider the digraph ${\Gamma}_+$ that has the same vertex set as $\Gamma$ and the arc set is the set  $(\alpha_0, \beta_0)G_{++}\cup (\beta_0, \alpha_1)G_{++}$.   First we show that it is impossible that there are vertices $\gamma$ and $\delta$ such that both $(\delta, \gamma)$ and $(\gamma, \delta)$ are arcs in ${\Gamma}_+$.   The group $G_{++}$ fixes both ends of $\Gamma$ but if there was an element $f\in G_{++}$ that would transpose two adjacent vertices, say that the edge between them is red, then there would be an element $h$ in $G_{++}$ that would transpose the vertices $\alpha_0$ and $\beta_0$ and map the line $L$ to itself such that $\alpha_i h=\beta_{-i}$ and $\beta_i g=\alpha_{-i}$ and thus would not fix the two ends of $\Gamma$, contradicting our assumptions.  Contract now all the arcs in ${\Gamma}_+$ that come from red edges in $\Gamma$ and we get a digraph $\Delta$ with two ends where the in- and out-degrees of every vertex are 2.  By Theorem~\ref{thm:trivalentC} this digraph is highly-arc-transitive and thus isomorphic to $B_s(\Delta_2)$ for some $s\geq 2$.   Then $\Gamma$ is isomorphic to $\Theta_s$.  
\end{proof}

\section{Connection with totally disconnected,\\ locally compact groups}\label{sec:Scale}

The study of totally disconnected, locally compact groups has become an active field in recent years, largely due to the efforts of George Willis and his coworkers, see e.g.~\cite{Willis1994} and \cite{CapraceReidWillis2017}.  The connection with group actions on graphs uses the {\em Cayley--Abels graph}.  

Let $G$ be a compactly generated, totally disconnected, locally compact group.  If $G$ acts vertex-transitively on a connected, locally finite graph $\Gamma$ such that the stabilizers of vertices are compact, open subgroups of $G$, then we say that $\Gamma$ is a {\em Cayley--Abels graph} for $G$.  A Cayley--Abels graph for $G$ can be constructed by starting with a compact generating set $C$ and a compact open subgroup $U$ of $G$ (such a subgroup always exists by an theorem of van Dantzig, \cite{vanDantzig1936}).  Then we form the Cayley graph of $G$ with respect to $C$ and define $\Gamma$ as the quotient graph with respect to the left action of $U$.  Note that the vertex set of $\Gamma$ is the set of right cosets of the subgroup $U$.  For further information and another construction see the survey paper \cite{Moller2010}.  Define $\mv(G)$ as the lowest possible degree of a Cayley--Abels graph for $G$.  This concept is the main topic of discussion in \cite{ArnadottirLederleMoller2020a}.  

In \cite{Willis1994}, Willis defined the concepts of {\em tidy subgroups} and the {\em scale function}.  In this work we will only discuss the scale function and we use as definition a formulation from Willis's later paper \cite{Willis2001}.  The {\em scale function} on a totally disconnected, locally compact group $G$ is the function $s:G\to \ZZ_+$ defined by the formula
$$s(g)=\min\{|U:U\cap g^{-1}Ug| \mid U \text{ compact open subgroup of } G\}.$$

A totally disconnected, locally compact group is said to be {\em uniscalar} if $s(g)=1$ for all $g\in G$.  Let $U$ be a compact, open subgroup of $G$ and consider the action of $G$ on the set of right cosets $\Omega=G/U$.  Set $\alpha=U$ and think of $\alpha$ as a point in $\Omega$.  Then 
$$s(g)=\lim_{n\to \infty} |(\alpha g^n) G_\alpha|^{1/n},$$
and, furthermore, $s(g)=1$ if and only if there is a constant $C$ such that $|(\alpha g^i)G_\alpha|\leq C$ for all $i=0, 1, 2, \ldots$ (see \cite[Corollary 7.8]{Moller2002}).

The connection between totally disconnected, locally compact groups and group actions on graphs works in both directions.  When $G$ is a group acting on a set $\Omega$, e.g.~the automorphism group of a graph $\Gamma$ acting on the vertex set $\V\Gamma$,  we can endow $G$ with the {\em permutation topology}, see for instance \cite{Woess1991} and \cite{Moller2010}.   One way to define the permutation topology is to say that a neighbourhood basis of the identity is formed by the family of all subgroups of the form $G_{(\Phi)}$, where $\Phi$ ranges over all finite subsets of $\Omega$.  If the group $G$ already has a topology and the stabilizer $G_\alpha$ of a point $\alpha\in \Omega$ is open, then the permutation topology is a subset of the topology on $G$.  
The convergence defined in Section~\ref{sec:Convergence} is the same as convergence in this topology.  If $G$ is a closed subgroup of the automorphism group of a locally finite graph $\Gamma$, then $G$ is a totally disconnected, locally compact group, see  \cite[Lemma 1]{Woess1991} and \cite[Lemma 2.2]{Moller2010}.

\begin{lemma}\label{lem:uniscalar-two-ends}
Let $G$ be a totally disconnected, locally compact group.  Suppose $\Gamma$ is a trivalent Cayley--Abels graph for $G$ such that the conditions in Case {\bf C} are satisfied.  If the group $G$ is uniscalar, then $\Gamma$ has two ends and $G$ has a compact, open, normal subgroup.
\end{lemma}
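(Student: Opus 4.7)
The plan is to first deduce that $\Gamma$ has exactly two ends by applying Lemma~\ref{lem:trivalent-two-ends} to a translation in $G$ coming from uniscalarity, and then to construct the desired compact, open, normal subgroup as the kernel of a Busemann-type translation-length homomorphism, exploiting that the translation has only finitely many orbits on $V\Gamma$.

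For the first step, I would choose an alternating line $L=\ldots,\alpha_{-1},\beta_{-1},\alpha_0,\beta_0,\alpha_1,\beta_1,\ldots$ with the colouring convention of Lemma~\ref{lem:trivalent-two-ends}; by Corollary~\ref{cor:transitive-on-lines} there exists $g\in G$ with $\alpha_ig=\alpha_{i+1}$ and $\beta_ig=\beta_{i+1}$ for all $i$. Uniscalarity gives $s(g)=1$, and the scale-function characterisation recalled in this section provides a constant $C$ with $|\alpha_n G_{\alpha_0}|=|\alpha_0 g^n G_{\alpha_0}|\le C$ for all $n\ge 1$. Lemma~\ref{lem:trivalent-two-ends} then yields that $\Gamma$ has two ends $\omega_1,\omega_2$, and the argument in its proof moreover shows that $\langle g\rangle$ has at most $2C$ orbits on $V\Gamma$ (so $\Gamma$ has linear growth).

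For the second step, fix the ray $R=\alpha_0,\beta_0,\alpha_1,\beta_1,\ldots$ toward $\omega_1$, enumerate its vertices $r_0,r_1,\ldots$ so that $gr_n=r_{n+2}$, and define $b\colon V\Gamma\to\mathbb{Z}$ by $b(v)=\lim_{n\to\infty}(d(v,r_n)-n)$. The sequence is non-increasing, and linear growth of $\Gamma$ furnishes the linear lower bound $d(r_0,r_n)\ge n-K$ needed for convergence. From $gr_n=r_{n+2}$ one reads $b(gv)=b(v)-2$, so $b$ is injective on each $\langle g\rangle$-orbit and takes values lying in a single coset of $2\mathbb{Z}$; together with the bound of $2C$ on the number of orbits, every level set $b^{-1}(k)$ is finite. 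Let $G_0:=G_{\omega_1}$, open of index at most $2$ in $G$. For $h\in G_0$ the value $c(h):=b(hv)-b(v)$ is independent of $v$, giving a homomorphism $c\colon G_0\to\mathbb{Z}$; its kernel $N$ contains the open subgroup $G_0\cap G_{\alpha_0}$ (hence is open) and satisfies $N\alpha_0\subseteq b^{-1}(0)$ (hence is compact, being a finite union of cosets of the closed subgroup $N\cap G_{\alpha_0}\le G_{\alpha_0}$). Normality of $N$ in $G_0$ is immediate; for the only possibly remaining case of an $s\in G\setminus G_0$ swapping the two ends, one computes $c(shs^{-1})=-c(h)$ using that with a suitably normalised Busemann function at $\omega_2$ one has $b_{\omega_1}\circ s=b_{\omega_2}$ and $c_{\omega_1}=-c_{\omega_2}$ on $G_0$; hence $N$ is $s$-invariant and normal in $G$. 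The main obstacle is compactness of $N$, which depends on finiteness of the level sets of $b$ and thereby on the boundedness of the number of $\langle g\rangle$-orbits extracted from the proof of Lemma~\ref{lem:trivalent-two-ends}; once that is in hand, the rest is routine bookkeeping with Busemann functions and the symmetry of the two ends.
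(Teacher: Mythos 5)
Your first paragraph is exactly the paper's argument: choose the alternating line, use Corollary~\ref{cor:transitive-on-lines} to get the translation $g$, use $s(g)=1$ to bound $|\alpha_n G_{\alpha_0}|$, and apply Lemma~\ref{lem:trivalent-two-ends}. For the second conclusion the paper does not construct anything: it simply invokes the known structure result of M\"oller and Seifter (Proposition 3.2 of their 1998 paper), which gives a compact, open, normal subgroup $K$ with $G/K\cong\ZZ$ or $D_\infty$. Your Busemann-function construction is an attempt to reprove that result from scratch, which would be legitimate, but as written it has genuine gaps.

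The central gap is the assertion that $c(h):=b(hv)-b(v)$ is independent of $v$ for $h\in G_{\omega_1}$. This is \emph{not} a general property of Busemann functions on two-ended graphs: in the ladder graph $\ZZ\times\{0,1\}$ (horizontal edges plus rungs), with $b$ taken along the ray $(0,0),(1,0),(2,0),\dots$, the automorphism swapping the two rows fixes both ends, yet $b(hv)-b(v)$ equals $+1$ on one row and $-1$ on the other. So well-definedness of your homomorphism $c$ (and likewise the identities $b_{\omega_1}\circ s=b_{\omega_2}$ and $c_{\omega_1}=-c_{\omega_2}$ used for normality) genuinely needs the special structure of the graphs at hand -- e.g.\ via Theorem~\ref{thm:2-ends}, $\Gamma\cong\Theta_s$ carries a ``height'' function under which every edge changes height by exactly $1$, the Busemann function is height up to sign and an additive constant, and end-preserving, colour-preserving automorphisms preserve height differences -- and none of this is established in your proposal. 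A second, related gap is the convergence of $b$: you justify $d(r_0,r_n)\ge n-K$ by ``linear growth'', but linear growth does not make an injective ray additively geodesic (a zig-zag ray across the rungs of the ladder has speed $1/2$ in a linear-growth graph). Again the needed bound is true here, but only because of the structure of $\Theta_s$ (every edge changes the height by $1$, so alternating lines are geodesics), not because of growth. With those two points repaired -- or by simply citing the two-ended structure theorem as the paper does -- the remaining bookkeeping (finiteness of level sets from the bound on $\langle g\rangle$-orbits, openness of $N$ via $G_{\omega_1}\cap G_{\alpha_0}$, compactness of $N$ as finitely many cosets of $N\cap G_{\alpha_0}$) is fine.
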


\begin{proof}   Let $\ldots, \alpha_{-1}, \beta_{-1}, \alpha_0, \beta_0, \alpha_1, \beta_1, \alpha_2,\ldots$ be an alternating line in $\Gamma$ such that the edges of  type $\{\alpha_i, \beta_i\}$ are red and  edges of the type $\{\beta_i, \alpha_{i+1}\}$ are blue. By Corollary~\ref{cor:transitive-on-lines}, there exists $g\in G$ such that $\alpha_i g=\alpha_{i+1}$ and $\beta_i g=\beta_{i+1}$.  As mentioned above, the assumption that $s(g)=1$ implies that there is a constant $C$ such that $C\geq |(\alpha_0 g^n) G_{\alpha_0}|=|\alpha_n G_{\alpha_0}|$ for all $n$ and now we see from Lemma~\ref{lem:trivalent-two-ends} that $\Gamma$ has just two ends.  Then there is a compact open normal subgroup $K$ such that $G/K$ is either isomorphic to $\ZZ$ or the infinite dihedral group $D_\infty$, see \cite[Proposition 3.2]{MollerSeifter1998}. 
\end{proof}

\begin{theorem}\label{thm:not-uniscalar}
Suppose $G$ is a compactly generated, totally disconnected, locally compact group that does not have a compact, open, normal subgroup.  If $\mv(G)=3$ then $G$ is not uniscalar.   
\end{theorem}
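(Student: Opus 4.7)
The plan is to prove the contrapositive: assume $G$ is uniscalar with $\mv(G)=3$, and deduce that $G$ has a compact, open, normal subgroup. Let $\Gamma$ be a trivalent Cayley--Abels graph for $G$, pick $\alpha \in \V\Gamma$, and recall from the discussion preceding the theorem that uniscalarness is equivalent to $|\alpha g^{i} G_\alpha|$ being bounded in $i$ for every $g \in G$.

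First I would dispose of the elementary subcases. If $G_\alpha$ is finite, then the open subgroup $G_\alpha$ is discrete and so is $G$, whence $\{e\}$ is a compact, open, normal subgroup. Letting $K$ denote the kernel of the action of $G$ on $\Gamma$, the subgroup $K \leq G_\alpha$ is compact and normal in $G$, and if $K$ has finite index in $G_\alpha$ then $K$ itself is already compact, open, and normal. So I may assume $G_\alpha/K$ is infinite, in which case the induced faithful action of $G/K$ on the trivalent graph $\Gamma$ falls into one of the three cases \textbf{A}, \textbf{B}, \textbf{C} of Section~\ref{sec:three-cases}.

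Case \textbf{C} is immediate from Lemma~\ref{lem:uniscalar-two-ends}. In Cases \textbf{A} and \textbf{B}, Corollary~\ref{cor:tree} forces $\Gamma$ to be the $3$-regular tree $T_3$, and Theorems~\ref{thm:trivalentA} and \ref{thm:trivalentB} establish that $G$ is highly-arc-transitive on $\Gamma$ (Case \textbf{A}) or on the directed version $\Gamma_+$ obtained from an arc-orbit (Case \textbf{B}). Vertex-transitivity on $T_3$ yields a hyperbolic element $g \in G$ of translation length $\ell \geq 1$ with $\alpha$ on its axis; in Case \textbf{B} I take $g$ to be any element sending $\alpha$ to a forward neighbour in $\Gamma_+$, which is automatically forward-translating because the unique in-arc at each vertex is preserved by $G$. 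Then highly-arc-transitivity makes $G_\alpha$ transitive on the $3 \cdot 2^{n\ell-1}$ vertices at distance $n\ell$ from $\alpha$ in $T_3$ (Case \textbf{A}), respectively on the $2^{n\ell}$ forward descendants of $\alpha$ at directed distance $n\ell$ in $\Gamma_+$ (Case \textbf{B}, where distinct directed paths in the tree $\Gamma_+$ have distinct endpoints). In either case $|\alpha g^n G_\alpha|$ is unbounded, contradicting uniscalarness.

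I expect the main technical issue to be checking, in Case \textbf{B}, that an element translating $\alpha$ to a forward neighbour really does translate forward along a directed line rather than, say, inverting an edge or translating ``backward'' toward the fixed end. This relies on the rigidity of the unique in-arc in $\Gamma_+$, which rules out edge-inversion and forces the $\langle g \rangle$-orbit of $\alpha$ to coincide with a forward-directed line; note that a backward-translating element would not help, since $G_\alpha$ fixes the entire ancestor ray and the corresponding orbits would be singletons. Once this point is settled the orbit counts are mechanical, and the three cases together give the desired contradiction.
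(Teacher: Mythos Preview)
Your proof is correct and follows the same overall scheme as the paper's: split into Cases \textbf{A}, \textbf{B}, \textbf{C}; in \textbf{A} and \textbf{B} use high-arc-transitivity (Theorems~\ref{thm:trivalentA} and~\ref{thm:trivalentB}) to show that $|\alpha g^n G_\alpha|$ grows without bound, contradicting uniscalarness; and treat Case \textbf{C} separately. The one substantive difference is in Case \textbf{C}. You invoke Lemma~\ref{lem:uniscalar-two-ends} to produce a compact, open, normal subgroup directly, whereas the paper uses Lemma~\ref{lem:trivalent-two-ends} to get two ends and then an external result (Theorem~44 of \cite{ArnadottirLederleMoller2020a}) to conclude $\mv(G)=2$, contradicting the hypothesis $\mv(G)=3$. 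Your route is more self-contained and, as you may have noticed, never actually uses $\mv(G)=3$ --- only that $G$ has \emph{some} trivalent Cayley--Abels graph --- so what you have written is really a direct proof of Corollary~\ref{cor:uniscalar implies nearly discrete}, from which the theorem follows.

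One small point: in Case \textbf{A} the assertion that vertex-transitivity on $T_3$ yields a hyperbolic element with $\alpha$ on its axis deserves a word of justification. The quickest fix is to use arc-transitivity to find $g$ mapping an arc $(\alpha_0,\alpha_1)$ to an adjacent arc $(\alpha_1,\alpha_2)$; such a $g$ is visibly hyperbolic of translation length $1$ (an elliptic element cannot move a vertex to a neighbour while moving that neighbour further away). This is exactly the translation the paper uses.
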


\begin{proof}   Let $\Gamma$ be a trivalent Cayley--Abels graph for $G$.  We consider separately what happens in Cases {\bf A}, {\bf B} and {\bf C}.

Let us first look at Case {\bf A}.  Consider an infinite line $\ldots, \alpha_{-1}, \alpha_0, \alpha_1, \alpha_2,\ldots$.  Let $g\in G$ be an element such that $\alpha_i g=\alpha_{i+1}$ for all $i$.  Then $\alpha_0 g^n=\alpha_n$ and by Theorem~\ref{thm:trivalentA} we see that $|\alpha_n G_{\alpha_0}|= 3\cdot 2^{n-1}$ and then 
$$s(g)=\lim_{n\to \infty} |(\alpha g^n) G_\alpha|^{1/n}=\lim_{n\to \infty} \big(3\cdot 2^{n-1}\big)^{1/n}=2.$$
Hence $G$ is not uniscalar.

In Case {\bf B} we let $\Gamma_+$ be the digraph defined in the proof of Theorem~\ref{thm:trivalentB}.  Suppose that $(\ldots, \alpha_{-1}, \alpha_0, \alpha_1, \ldots)$ is a 2-way infinite arc in $\Gamma_+$ and $g\in G$ acts like a translation on this arc such that $\alpha_i g=\alpha_{i+1}$ for all $i$.  The fact that $G$ acts highly-arc-transitively on $\Gamma_+$ implies that if $n\geq 0$ then $|\alpha_n G_{\alpha_0}|= 2^{n}$.  Thus 
$$s(g)=\lim_{n\to \infty} |(\alpha g^n) G_\alpha|^{1/n}=\lim_{n\to \infty} \big(2^{n}\big)^{1/n}=2.$$

And, finally, it is the case when the action of $G$ on $\Gamma$ satisfies the conditions in Case {\bf C}.  From Lemma~\ref{lem:trivalent-two-ends} we see that if $G$ is uniscalar and has a trivalent Cayley--Abels graph satisfying the conditions in Case {\bf C} then $\Gamma$ has two ends and by Theorem 44 in \cite{ArnadottirLederleMoller2020a} it follows that $\mv(G)=2$.  Therefore, if $\mv(G)=3$ then $G$ can not be uniscalar.  
\end{proof}

Let $G$ be a totally disconnected, locally compact group and $g\in G$.  
From the definition of the scale function we see that $s(g)=1$ if and only if $g$ normalizes some compact, open subgroup of $G$, and the group $G$ is uniscalar if and only if for every element of $G$ there is some compact open subgroup $G$ normalized by $g$.   If $G$ has a compact, open, normal subgroup then $G$ is clearly uniscalar.  Bhattacharjee and Macpherson \cite[Section 3]{BhattacharjeeMacpherson2003} (following up on work by Kepert and Willis, \cite{KepertWillis2001}), constructed an example of a compactly generated, totally disconnected, locally compact group that has no compact, open, normal subgroup, but every element normalizes some compact open subgroup.  On the other hand Gl\"ockner and Willis have shown in \cite{GlocknerWillis2001} that a compactly generated, uniscalar $p$-adic Lie group has a compact, open, normal subgroup.

\begin{corollary}\label{cor:uniscalar implies nearly discrete}
Let $G$ be a compactly generated, totally disconnected, locally compact group having a trivalent Cayley--Abels graph.
If every $g \in G$ normalizes a compact open subgroup of $G$ (i.e.~$G$ is uniscalar) then $G$ has a compact, open, normal subgroup.
\end{corollary}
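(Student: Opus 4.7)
The plan is to deduce this corollary directly from Theorem~\ref{thm:not-uniscalar} by a contrapositive argument, after sweeping up the low-degree cases. I would argue by contradiction, assuming that $G$ is uniscalar, admits a trivalent Cayley--Abels graph, and nevertheless has no compact, open, normal subgroup. Since $G$ has a trivalent Cayley--Abels graph, we have $\mv(G) \leq 3$. The main step is then Theorem~\ref{thm:not-uniscalar} applied to the case $\mv(G) = 3$: its conclusion is that $G$ is not uniscalar, directly contradicting the hypothesis.

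The remaining task is to rule out the possibilities $\mv(G) \in \{0,1,2\}$ under the standing assumption that $G$ has no compact, open, normal subgroup. If $\mv(G) = 0$, the graph is a single vertex and $G$ itself is compact, so $G$ is a compact, open, normal subgroup of itself. If $\mv(G) = 1$, the Cayley--Abels graph is a single edge, so the vertex stabilizer is a compact open subgroup of index~$2$ and is therefore normal. If $\mv(G) = 2$, the Cayley--Abels graph is either a finite cycle, in which case the kernel of the resulting action on a finite vertex set is a compact, open, normal subgroup of finite index, or else a two-way infinite line; in the latter case $G$ is two-ended and Proposition~3.2 of Möller--Seifter (as already invoked in the proof of Lemma~\ref{lem:uniscalar-two-ends}) provides a compact, open, normal subgroup $K$ with $G/K$ isomorphic to $\ZZ$ or the infinite dihedral group $D_\infty$. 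Each of these cases contradicts the standing assumption, completing the argument.

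I do not anticipate any real obstacle: the substantive content is already packed into Theorem~\ref{thm:not-uniscalar}, and the rest is a routine case split on the smallest degree of a Cayley--Abels graph of $G$. The only slightly delicate piece is the two-ended subcase of $\mv(G) = 2$, but the requisite structural result for $2$-ended compactly generated tdlc groups is part of the toolkit assembled for Lemma~\ref{lem:uniscalar-two-ends} and needs no additional preparation.
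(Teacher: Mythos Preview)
Your argument is correct, but it takes a more circuitous route than the paper's. You introduce the invariant $\mv(G)$ and then must dispose of the cases $\mv(G)\in\{0,1,2\}$ by hand, whereas the paper never leaves the given trivalent Cayley--Abels graph $\Gamma$. Its proof simply splits on whether the action of $G$ on $\Gamma$ is discrete: if it is, the kernel of the action is already a compact, open, normal subgroup; if it is not, then (by the Case~\textbf{A}/\textbf{B}/\textbf{C} analysis together with the uniscalar hypothesis, exactly as in the proof of Theorem~\ref{thm:not-uniscalar}) the graph $\Gamma$ must have two ends, and M\"oller--Seifter's Proposition~3.2 furnishes the desired subgroup. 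So the paper avoids your low-degree case split entirely.

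What your approach buys is that it treats Theorem~\ref{thm:not-uniscalar} as a clean black box, so the corollary becomes purely formal once that theorem is in hand. The cost is the extra work on $\mv(G)\leq 2$, which the paper sidesteps by arguing directly with $\Gamma$. Both proofs ultimately rest on the same structural input (the two-ended conclusion and M\"oller--Seifter), just organized differently.
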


\begin{proof}
If the action of $G$ on $\Gamma$ is discrete, then there is nothing more to be done because the kernel of the action is a compact, open, normal subgroup.   But if the action is not discrete, then the graph $\Gamma$ must have precisely two ends and then there is a compact, open, normal subgroup $K$ such that $G/K$ is either isomorphic to $\ZZ$ or the infinite dihedral group $D_\infty$, see \cite[Proposition 3.2]{MollerSeifter1998}.  
\end{proof}

\section{Trofimov's result}

For a graph $\Gamma$ we let $\Gamma_n$ denote the the graph that has the same vertex set as $\Gamma$ and two distinct vertices $\alpha$ and $\beta$ are adjacent in $\Gamma_n$ if and only if $d_\Gamma(\alpha, \beta)\leq n$.  

\begin{definition} {\rm (\cite{Cornulier2019})}
Let $\Gamma$ be a graph.  We say that $\Gamma$ {\em essentially includes a tree} if  the graph $\Gamma_n$ contains the 3-regular tree.
\end{definition}

An action of a group $G$ on a set $\Omega$ is said to be {\em nearly discrete} if there is a $G$-congruence $\sigma$ on $\Omega$ with finite equivalence classes such that if $K$ is the kernel of the action of $G$ on $\Omega/\sigma$ then the action of $G/K$ on $\Omega/\sigma$ is discrete.   When $G$ acts as a closed group on a locally finite graph then the kernel is a compact, open subgroup in the permutation topology (see \cite[Fact 5.6]{Cornulier2019}).

In his paper from 1984, \cite{Trofimov1984}, Trofimov considers the following question:

\smallskip

{\em Is it true that if $\Gamma$ is a locally finite connected graph and $G$ is a vertex-transitive subgroup of $\aut(\Gamma)$, then either the action is nearly discrete or the graph $\Gamma$ essentially includes a tree?}

\smallskip

Recently Cornulier,  \cite{Cornulier2019}, has constructed an example of a vertex-transitive, locally finite graph that does not essentially include a tree and the action of its automorphism group is not nearly discrete, thereby giving a negative answer to Trofimov's  question.   But Trofimov had shown that there cannot be a counterexample of degree 3. Our methods give a short proof of that result.

\begin{theorem}{\rm (\cite[Theorem 3.1]{Trofimov1984})}\label{thm:Trofimov}
Let $\Gamma$ be a vertex-transitive, trivalent graph and $G=\aut \Gamma$.  Then $G$ has a compact, normal subgroup $N$ such that the stabilizers in $\aut\, \Gamma/N$ of vertices in $\Gamma/N$ are finite, or $\Gamma_2$ contains a subgraph isomorphic to the 3-regular tree.  
\end{theorem}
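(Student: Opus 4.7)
The plan is to work through the cases of Section~3 applied to $G=\aut(\Gamma)$, handling each by producing a compact normal subgroup $N$ or by exhibiting a $3$-regular subtree of $\Gamma_2$. If vertex stabilisers in $G$ are finite, the trivial subgroup $N=\{1\}$ already works. Otherwise the action is non-discrete, and in Cases~A and~B Theorems~\ref{thm:trivalentA} and~\ref{thm:trivalentB} force $\Gamma$ itself to be the $3$-regular tree, so $\Gamma_2 \supseteq \Gamma$ trivially contains one. This reduces the problem to Case~C, which I split by the number of ends of $\Gamma$. If $\Gamma$ has exactly two ends then Theorem~\ref{thm:2-ends} identifies $\Gamma$ with some $\Theta_s$, and \cite[Proposition~3.2]{MollerSeifter1998} provides a compact open normal subgroup $N \le G$ with $G/N$ virtually cyclic; the quotient graph $\Gamma/N$ is then essentially a line, on which the full automorphism group has finite vertex stabilisers.

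The substantive case is Case~C with $\Gamma$ not having exactly two ends. Along any alternating line through a base vertex $\alpha_0$, the contrapositive of Lemma~\ref{lem:trivalent-two-ends} (sharpened by the remark following it) forces $|\alpha_n G_{\alpha_0}| \ge 2^n$ for every $n \ge 1$; combined with the $G_{\alpha_0}$-transitivity from Theorem~\ref{thm:trivalentC} on the at most $2^n$ alternating $2n$-arcs starting at $\alpha_0$ with a fixed initial colour, equality holds and those arcs have pairwise distinct endpoints. I would embed the desired $3$-regular tree in $\Gamma_2$ via the rb-alternating $2$-arc structure. Let $D$ be the digraph on $\V\Gamma$ whose arcs are the rb-alternating $2$-arcs of $\Gamma$: every vertex has in-degree and out-degree~$2$, and any directed cycle in $D$ is an alternating cycle in $\Gamma$, forbidden by Corollary~\ref{cor:no-alternating-cycles}, so $D$ is a DAG. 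Fix an arc $\alpha_0 \to \alpha_1$ of $D$ and splice together the tree of forward walks emanating from $\alpha_1$ in $D$ with the tree of backward walks terminating at $\alpha_0$, attaching them along the single $\Gamma_2$-edge $\{\alpha_0,\alpha_1\}$. The two spliced trees have disjoint vertex sets, since a common vertex would complete a directed cycle of $D$ through the arc $\alpha_0 \to \alpha_1$; each edge of the resulting tree is an rb-alternating $2$-arc and hence a $\Gamma$-distance-$2$ pair, so lives in $\Gamma_2$; and the degree count gives $3$ at every vertex, since both $\alpha_0$ and $\alpha_1$ acquire a third edge from the splicing while all other vertices already have one parent and two children in their own subtree.

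The main obstacle is the genuine injectivity of the splice into $\V\Gamma$. Injectivity within a single generation is precisely the equality $|\alpha_n G_{\alpha_0}| = 2^n$ applied at each base vertex by vertex-transitivity. Cross-generation injectivity within each walk-tree — that no vertex is reached by forward walks of two different lengths from the same source in $D$ — is the delicate point; the plan is to argue that any such coincidence, together with the DAG structure of $D$, can be assembled into an alternating closed walk in $\Gamma$ and so violates Corollary~\ref{cor:no-alternating-cycles}. The disjointness of the two spliced trees is the same kind of argument applied to the glueing arc, and both pivot on Section~4's alternating-cycle analysis; once they are in place, the rest of the construction is routine bookkeeping on parents and children.
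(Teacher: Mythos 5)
Your skeleton is the same as the paper's: trivial $N$ for finite stabilizers, Cases {\bf A} and {\bf B} via Theorems~\ref{thm:trivalentA} and \ref{thm:trivalentB}, and in Case {\bf C} a dichotomy between deficient growth along an alternating line (two ends, hence a compact normal subgroup by \cite[Proposition 3.2]{MollerSeifter1998}) and full growth $|\alpha_n G_{\alpha_0}|=2^n$, in which case a $3$-regular tree is exhibited inside $\Gamma_2$. The difference is how the tree is produced, and that is exactly where the gap sits. You form the digraph $D$ of rb-alternating $2$-arcs and take the forward-walk tree from $\alpha_1$ and the backward-walk tree into $\alpha_0$; same-level injectivity you correctly get from the count $2^n$ together with the transitivity in Theorem~\ref{thm:trivalentC}, but the cross-level injectivity -- your self-declared delicate point -- is left to a plan that does not work as described. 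Acyclicity of $D$ (which does follow from Corollary~\ref{cor:no-alternating-cycles}) never forbids two directed paths of \emph{different} lengths from $\alpha_1$ to the same vertex $\gamma$, and such a coincidence cannot be assembled into an alternating closed arc: both paths correspond to alternating arcs beginning with a red edge at $\alpha_1$ and ending with a blue edge at $\gamma$, so traversing one and then the reverse of the other creates two consecutive blue edges (or a backtrack) at $\gamma$ and two consecutive red edges at $\alpha_1$. The resulting closed walk is not alternating, and Corollary~\ref{cor:no-alternating-cycles} says nothing about it; so the key step of your construction is unproved, and the proposed mechanism for it fails.

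The missing step has a one-line proof, and it is precisely the device by which the paper sidesteps the issue: by Theorem~\ref{thm:trivalentC} the level-$n$ set of your forward tree (endpoints of rb-alternating $2n$-arcs starting at $\alpha_1$) is a single $G_{\alpha_1}$-orbit, of size $2^n$; orbits of one group are equal or disjoint, and $2^m\neq 2^n$ for $m\neq n$, so distinct levels are disjoint. The paper packages this by working directly with the subgraph of $\Gamma_2$ spanned by the orbits $\alpha_iG_{\alpha_0}$ and the orbit edges $\{\alpha_i,\alpha_{i+1}\}G_{\alpha_0}$, so that cross-level disjointness is automatic, whereas your walk-tree formulation makes it a statement that must be proved. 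Two further points to tighten. For the backward tree into $\alpha_0$ you need full growth $2^n$ for the \emph{blue-first} (br-alternating) direction as well; the remark after Lemma~\ref{lem:trivalent-two-ends} is stated for the red-first direction, so you must run the colour-swapped version of that lemma (the paper's ``apply the same argument to $G_{\alpha_{-1}}$'' is the same move and deserves the same caveat). And in the two-ended subcase, concluding that the stabilizers in $\aut(\Gamma/N)$ -- not merely in $G/N$ -- are finite requires choosing $N$ so that the quotient really is a cycle or a line; ``essentially a line'' is not enough on its own, since two-ended vertex-transitive graphs (the graphs $\Theta_s$ themselves) can have infinite vertex stabilizers in their full automorphism groups.
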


\begin{proof}
If the stabilizers of vertices in $G$ are finite, then we can take $N$ as the trivial group.  Thus we may assume that the stabilizers in $G$ of vertices in $\Gamma$ are infinite.   If $G$ is edge-transitive then Corollary~\ref{cor:tree} says that $\Gamma$ is a tree.  Hence we may assume that $G$ is not transitive on the edges of $\Gamma$ and that the conditions in Case {\bf C} are satisfied.  Let $\ldots, \alpha_{-1}, \beta_{-1}, \alpha_0, \beta_0, \alpha_1, \beta_1, \ldots$ be an alternating line in $\Gamma$ such that the edges $\{\alpha_i, \beta_i\}$ are red and the edges $\{\beta_i, \alpha_{i+1}\}$ are blue.  If there is a positive integer $n$ such that $|\alpha_n G_{\alpha_0}|<2^n$, then it follows from Lemma~\ref{lem:uniscalar-two-ends} and the remark following its proof that $\Gamma$ has exactly two ends and the action is nearly discrete.

Since $|\alpha_n G_{\alpha_0}|\leq 2^n$, we are now left to consider the case where $|\alpha_n G_{\alpha_0}|= 2^n$ for every positive integer $n$.  First note that $\{\alpha_i, \alpha_{i+1}\}$ is an edge in the graph $\Gamma_2$.  We consider the subgraph $\Delta$ of $\Gamma_2$ with vertex set $\bigcup_{i=0}^\infty \alpha_i G_{\alpha_0}$ and edge set $\bigcup_{i=0}^\infty \{\alpha_i, \alpha_{i+1}\} G_{\alpha_0}$.  In this graph the vertex $\alpha_0$ has degree 2 and every other vertex has degree 3.  The set of vertices in $\Delta$ at distance $n$ from $\alpha_0$ is equal to $\alpha_n G_{\alpha_0}$ and since $|\alpha_n G_{\alpha_0}|= 2^n$ we conclude that $\Delta$ is the infinite rooted binary tree.  When we apply the same argument to $G_{\alpha_{-1}}$ and the ray $\alpha_{-1}, \alpha_{-2}, \ldots$ we find another copy of the rooted binary tree inside $\Gamma_2$.  This second tree has root $\alpha_{-1}$ and is disjoint from the first one and since the two roots, $\alpha_{-1}$ and $\alpha_0$ are adjacent in $\Gamma_2$ these two tree together with the edge $\{\alpha_{-1}, \alpha_0\}$ gie a copy of the 3-regular tree. 

Hence, $\Gamma_2$ contains a subdivision of the 3-regular tree.
\end{proof}

\begin{remark}
The authors of this paper have not had access to Trofimov's original paper \cite{Trofimov1984} and have their information about this result from the review in MathSciNet and the later paper \cite{Trofimov2007}. 
\end{remark}

\medskip

Combining Proposition~\ref{prop:locally-dihedral} with Corollary~\ref{cor:uniscalar implies nearly discrete} and Theorem~\ref{thm:Trofimov} one gets:

\begin{proposition}{\rm (\cite[Example 5.5]{Trofimov2007})}
Let $G$ be a group that acts vertex-transitively on a locally finite, connected graph $\Gamma$ of degree $d$.  Assume that $G$ acts locally like the dihedral group with $2d$ elements in its usual action on a set with $d$ elements.   Then, either the action is nearly discrete or the graph $\Gamma$ essentially includes a tree.  
\end{proposition}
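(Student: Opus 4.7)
The plan is to reduce to the trivalent Case~{\bf C} via Proposition~\ref{prop:locally-dihedral} and then invoke Theorem~\ref{thm:Trofimov}.

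First, I apply Proposition~\ref{prop:locally-dihedral} to obtain a connected trivalent graph $\Delta$ on which $G$ acts vertex-transitively, together with a $G$-congruence $\sigma$ on $\V\Delta$ such that $\Gamma=\Delta/\sigma$ and every $\sigma$-class spans a $d$-gon. Let $\pi\colon\V\Delta\to\V\Gamma$ denote the canonical projection; it is $G$-equivariant with every fiber of size $d$.

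Second, if the $G$-action on $\Delta$ is discrete, then for every $v\in\V\Delta$ the stabilizer $G_v$ is finite, and the setwise stabilizer of the $\sigma$-class of $v$ (which coincides with the stabilizer in $G$ of the corresponding vertex of $\Gamma$) is a finite extension of $G_v$ by a subgroup of $\Sym(d)$, hence finite. So $G$ acts discretely, and \emph{a fortiori} nearly discretely, on $\Gamma$. Otherwise part~(2) of Proposition~\ref{prop:locally-dihedral} places the $G$-action on $\Delta$ in Case~{\bf C}, and applying Theorem~\ref{thm:Trofimov} to $\aut(\Delta)\supseteq G$ produces two alternatives: either (a) there is a compact normal subgroup $N\trianglelefteq\aut(\Delta)$ such that the vertex-stabilizers in $\aut(\Delta/N)$ are finite, or (b) $\Delta_2$ contains a subgraph isomorphic to the 3-regular tree.

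In subcase~(a), the $N$-orbits on $\V\Delta$ form a finite-class $G$-invariant partition, and one shows that combining it with $\sigma$ yields a finite-class $G$-congruence on $\V\Gamma$ with discrete quotient action, so the $G$-action on $\Gamma$ is nearly discrete. In subcase~(b), since distances can only decrease under $\pi$, the projection extends to a graph morphism $\Delta_2\to\Gamma_2$, so the $3$-regular tree $T\subseteq\Delta_2$ projects to an infinite connected subgraph of $\Gamma_2$. Because $\pi$ has fibers of size $d$, $\pi(T)$ inherits exponential growth, and one extracts a $3$-regular tree inside $\Gamma_n$ for some $n$ depending only on $d$; thus $\Gamma$ essentially includes a tree.

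The principal difficulty lies in subcase~(b): the map $\pi$ can identify distinct vertices of $T$ that happen to lie in a common $\sigma$-class, so $\pi(T)$ need not itself be a tree. To handle this one would inspect the explicit construction of $T$ in the proof of Theorem~\ref{thm:Trofimov} --- two rooted infinite binary trees at adjacent vertices $\alpha_0$ and $\alpha_{-1}$ joined by a single edge --- and locate three pairwise disjoint infinite sub-binary-trees of $T$ whose vertex sets are sufficiently spread out in $\Delta$ to project injectively under $\pi$, reassembling these into a $3$-regular tree in $\Gamma_n$ for a suitable $n$ controlled by $d$.
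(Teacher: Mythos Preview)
Your overall strategy matches the paper's: the paper states this proposition with no proof beyond the single sentence ``Combining Proposition~\ref{prop:locally-dihedral} with Corollary~\ref{cor:uniscalar implies nearly discrete} and Theorem~\ref{thm:Trofimov} one gets \ldots''.  So you are not diverging from the paper's route; you are attempting to supply the details the paper omits, and you have correctly located where the real work lies.

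That said, your write-up has genuine gaps in both subcases.  In subcase~(a) you assert that the $N$-orbit partition on $\V\Delta$ can be ``combined with $\sigma$'' to produce a finite-class $G$-congruence on $\V\Gamma$.  But $N$ is normal in $\Aut(\Delta)$, which need not preserve the red/blue colouring and hence need not respect $\sigma$; the join of the $N$-orbit partition with $\sigma$ could in principle have infinite classes.  The honest fix is to observe that in the proof of Theorem~\ref{thm:Trofimov} this case only arises when $\Delta$ has two ends; then $\Gamma=\Delta/\sigma$ (a quotient by uniformly finite fibres) also has two ends, and the standard structure theory of two-ended vertex-transitive graphs gives near-discreteness of the $G$-action on $\Gamma$ directly --- this is essentially where Corollary~\ref{cor:uniscalar implies nearly discrete} (or rather the ingredients behind it) enters, and it is the one cited result you do not use.

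Subcase~(b) is, as you say, the principal difficulty, and your proposed resolution is not a proof.  The claim that one can extract three ``sufficiently spread out'' sub-binary-trees which project injectively under $\pi$ is plausible but unsubstantiated; you give no mechanism for controlling which vertices of the tree land in a common $\sigma$-class.  A workable approach is to exploit the explicit description $\V\Delta=\A\Gamma$, $\pi((a,b))=a$: then the alternating line $\alpha_0,\beta_0,\alpha_1,\ldots$ projects to a walk $a_0,a_1,\ldots$ in $\Gamma$, and one can bound how often this walk revisits a vertex using the hypothesis $|\alpha_n G_{\alpha_0}|=2^n$ together with the fact that each $\sigma$-class has only $d$ elements.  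This lets one rebuild the binary-tree argument of Theorem~\ref{thm:Trofimov} inside $\Gamma_m$ for a suitable $m$ depending on $d$.  As written, though, your subcase~(b) is a sketch, not a proof.
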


\bibliographystyle{abbrv}
\bibliography{references}

\end{document}